\providecommand{\U}[1]{\protect\rule{.1in}{.1in}}
\newtheorem{theorem}{Theorem}
\theoremstyle{plain}
\newtheorem{corollary}{Corollary}
\newtheorem{lemma}{Lemma}
\newtheorem{proposition}{Proposition}
\newtheorem{remark}{Remark}
\numberwithin{equation}{section}
\begin{document}
\title[Uniqueness for some...]{Uniqueness for some classes of parabolic problems}
\author{F. Feo}
\address{{\tiny Dipartimento di Ingegneria}\\
{\tiny University of Naples Parthenope}\\
{\tiny Centro Direzionale, Isola C4 - 80143 Napoli}}
\email{{\tiny filomena.feo@uniarthenope.it}}
\date{}
\subjclass[2000]{ 35K55, 35K20, 35R05}
\keywords{Uniqueness, Parabolic operator, Weak solutions.}

\begin{abstract}
We prove some uniqueness results for weak solutions to some classes of
parabolic Dirichlet problems.

\end{abstract}
\maketitle

\numberwithin{equation}{section} \numberwithin{theorem}{section}
\numberwithin{lemma}{section} \numberwithin{remark}{section} \numberwithin{corollary}{section}\numberwithin{proposition}{section}

\section{Introduction}

In the present paper we investigate the uniqueness of weak solutions to the
following class of parabolic Dirichlet problems$\!$%
\begin{equation}
\!\!\left\{  \!%
\begin{array}
[c]{lll}%
\!u_{t}\!-\!\operatorname{div}\!a(x,t,u,\nabla u)\!+\!H(x,t,\nabla
u)\!+G(x,t,u)=\!f & \!\text{in}\! & \!Q_{T}\!:=\!\Omega\!\times\!\left(
0,T\right)  \!\!\\
\!u(x,t)\!=\!0 & \!\text{in}\! & \!\partial\Omega\!\times\!\left(  0,T\right)
\!\!\\
\!u(x,0)\!=\!u_{0}(x) & \!\text{on}\! & \!\Omega,\!\!
\end{array}
\right.  \label{prototipo}%
\end{equation}
where $\Omega$ is a bounded open subset of $\mathbb{R}^{N}$, $N\geq2$, $p>1$
and $T>0.$ We assume that $a:Q_{T}\times\mathbb{R\times R}^{N}\rightarrow
\mathbb{R}$, $H:Q_{T}\times\mathbb{R}^{N}\rightarrow\mathbb{R}$ and
$G:Q_{T}\times\mathbb{R}\rightarrow\mathbb{R}$ are Carath\'{e}odory functions
satisfying the following structural conditions%
\begin{equation}
a\left(  x,t,s,\xi\right)  \cdot\xi\geq\alpha_{1}\left\vert \xi\right\vert
^{p},\quad\label{ellittic}%
\end{equation}%
\begin{equation}
\left\vert a\left(  x,t,s,\xi\right)  \right\vert \leq\beta_{1}\left[
\left\vert s\right\vert ^{p-1}+\left\vert \xi\right\vert ^{p-1}+a_{1}%
(x,t)\right]  , \label{crescita}%
\end{equation}%
\begin{equation}
\left(  a\left(  x,t,s,\xi\right)  -a\left(  x,t,s,\xi^{\prime}\right)
\right)  \left(  \xi-\xi^{\prime}\right)  >0\text{ \ for }\xi\neq\xi^{\prime},
\label{monotonia}%
\end{equation}%
\begin{equation}
\left\vert H\left(  x,t,\xi\right)  \right\vert \leq b(x,t)\left\vert
\xi\right\vert ^{\gamma} \label{crescita H}%
\end{equation}
and
\begin{equation}
\left\vert G\left(  x,t,s\right)  \right\vert \leq c(x,t)\left\vert
s\right\vert ^{\lambda} \label{crescita G}%
\end{equation}
\noindent for a.e. $(x,t)\in Q_{T},$ $\forall s\in\mathbb{R},\forall\xi
,\xi^{\prime}\in\mathbb{R}^{N}$, where $\alpha_{1}$ and $\beta_{1}$ are
positive constants, $\gamma=p-\frac{N+p}{N+2},\lambda=p\frac{N+2}{N+1},$
$a_{1}\in L^{p^{\prime}}(Q_{T}),b\in L^{r}(Q_{T})$ and $c\in L^{\rho}(Q_{T})$
with $r=N+2$ and $\rho=\frac{N+p}{N}.$ Moreover $f=f_{0}-\operatorname{div}F$
with%
\begin{equation}
f_{0}\in L^{\left(  \frac{p(N+2)}{N}\right)  ^{\prime}}(Q_{T})\text{, }%
F\in\left(  L^{p^{\prime}}(Q_{T})\right)  ^{N}\text{ and }u_{0}\in
L^{2}(\Omega). \label{fff}%
\end{equation}
We recall that a weak solution\footnote{We refer to \cite{di benedetto} for
definitions of involved function spaces and parabolic framework.} to Problem
(\ref{prototipo}) is a measurable function belonging to $C(0,T;L^{2}%
(\Omega))\cap L^{p}(0,T;W_{0}^{1,p}(\Omega))$ such that $\forall t\in\left(
0,T\right]  $
\begin{align}
&  \int_{\Omega}uv(x,t)dx+%
%TCIMACRO{\diint _{Q_{t}}}%
%BeginExpansion
{\displaystyle\iint_{Q_{t}}}
%EndExpansion
\left[  -uv_{t}+a(u,\nabla u)\nabla v+H(\nabla u)v+G(u)v\right]
dxd\tau\label{sol deb}\\
&  =\int_{\Omega}u_{0}v(x,0)dx+%
%TCIMACRO{\diint _{Q_{t}}}%
%BeginExpansion
{\displaystyle\iint_{Q_{t}}}
%EndExpansion
\left(  f_{0}v+F\nabla v\right)  dxd\tau,\nonumber
\end{align}
for each $v\in W_{0}^{1,2}(0,T;L^{2}(\Omega))\cap L^{p}(0,T;W_{0}^{1,p}%
(\Omega)),$ where $Q_{t}:=\Omega\times\left(  0,t\right)  .$

If the function $a$ does not depend on $u$ and $H\equiv$ $G\equiv0,$ we known
there exists a unique weak solution (see \textit{e.g.} \cite{lions}). Here we
prove the uniqueness of weak solutions to Problem (\ref{prototipo}) when
$H\equiv0$ and $G\not \equiv 0$ assuming\ that operator $-\operatorname{div}%
a(x,t,u,\nabla u)$ is strongly monotone and the functions $a$ and $G$ are
Lipschitz continuous with respect to $u$ (that is usual\ as far as uniqueness
result concern)$.$ When $p\geq2$ we assume the principle part is not
degenerate, \textit{i.e.} in the model case $-\operatorname{div}a(x,t,u,\nabla
u)=-\operatorname{div}(a_{0}(x,t,u)(\varepsilon+\left\vert \nabla u\right\vert
^{p-1}\nabla u))$ with $\varepsilon>0$. In this case we can relax hypothesis
on the function $a$ assuming only a locally Lipschitz continuity with respect
to $u$ (see Section 2 for details). To our knowledge in literature there are
not any existence results for weak solutions to Problem (\ref{prototipo}) when
$H\equiv0$ and $G\not \equiv 0,$ then we will give some details (see
Proposition \ref{prop esistenza}) for convenience of the reader.

When $H\not \equiv 0$ and $G\equiv0,$ the existence of a weak solution to
problem (\ref{prototipo}) is investigated in \cite{Porzio}. If $a$ does not
depend on $u$ and under Lipschitz continuity on the lower order term $H$ , we
prove that such a solution is unique.

Our proof\ of uniqueness adapts the idea of \cite{alvino} (used also in
\cite{mio anisotropo} for an anisotropic elliptic operator) to the evolution
case: the main tool is the embedding in the parabolic equation framework. In
the case $H\equiv0$ and $G\not \equiv 0$ the method\ is improved using
Gronwall's Lemma. Our technique works also in proving some comparison principles.

There is an extensive literature about uniqueness of solution for elliptic
equations. We just mention some of these papers: e.g. \cite{alv-merc-fer},
\cite{alv-mercaldo}, \cite{alv-merc 2}, \cite{BP}, \cite{Bm}, \cite{B4 Cras},
\cite{Bocc}, \cite{casado}, \cite{Chipot} and \cite{GM}. For the evolution
case some uniqueness results can be found for example in \cite{artola},
\cite{russi}, \cite{l} and \cite{porzio 2} in the framework of weak solutions.
When datum $f$ is only integrable, uniqueness of renormalized and entropy
solutions is proved for example in \cite{andreu}, \cite{BM}, \cite{BMR}%
,\ \cite{mio con olivier uni} and \cite{prignet}.

\section{Statements of results}

First we study the case $H\equiv0$, that is we consider the following class of
nonlinear parabolic homogeneous Dirichlet problems%
\begin{equation}
\left\{
\begin{array}
[c]{lll}%
u_{t}-\operatorname{div}a(x,t,u,\nabla u)+G(x,t,u)=f & \text{in} & Q_{T}\\
u(x,t)=0 & \text{in} & \partial\Omega\times\left(  0,T\right) \\
u(x,0)=u_{0}(x) & \text{on} & \Omega,
\end{array}
\right.  \label{P0}%
\end{equation}
when (\ref{ellittic})-(\ref{monotonia}) and (\ref{crescita G})-(\ref{fff})
hold, function $a$\ also satisfies the following strong monotony condition
\begin{equation}
\left(  a\left(  x,t,s,\xi\right)  -a\left(  x,t,s,\xi^{\prime}\right)
\right)  \left(  \xi-\xi^{\prime}\right)  \geq\alpha\left(  \varepsilon
+\left\vert \xi\right\vert +\left\vert \xi^{\prime}\right\vert \right)
^{p-2}\left\vert \xi-\xi^{\prime}\right\vert ^{2} \label{monotonia forte}%
\end{equation}
and the following Lipschitz continuity condition
\begin{equation}
\left\vert a\left(  x,t,s,\xi\right)  -a\left(  x,t,s^{\prime},\xi\right)
\right\vert \leq\beta\left[  \phi+\left\vert \xi\right\vert ^{p-1}+\left(
\left\vert s\right\vert +\left\vert s^{\prime}\right\vert \right)  ^{\theta
}\right]  \left\vert s-s^{\prime}\right\vert \label{lip a}%
\end{equation}
and function $G$\ also satisfies the following Lipschitz continuity condition
\begin{equation}
\left\vert G\left(  x,t,s,\right)  -G\left(  x,t,s^{\prime}\right)
\right\vert \leq\varrho\left\vert s-s^{\prime}\right\vert \label{lip c}%
\end{equation}
for some $\theta\geq0$ with $\alpha>0$, $\varepsilon\geq0$, $\beta,\varrho>0$
and $\phi\geq0.$

We investigate separately the case $1<p<2$ and $p\geq2.$

\begin{theorem}
\label{th chipot}Let us assume $1<p<2,$ (\ref{ellittic})-(\ref{crescita}),
(\ref{crescita G})-(\ref{fff}), (\ref{monotonia forte}) with $\varepsilon=0$,
(\ref{lip a}) with $\theta=0$ and (\ref{lip c}) hold. Then there exists a
unique weak solution to Problem (\ref{P0}).
\end{theorem}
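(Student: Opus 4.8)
The plan is to establish uniqueness by a standard energy/duality argument adapted to the parabolic setting. Suppose $u_1$ and $u_2$ are two weak solutions of Problem~(\ref{P0}) with the same data $f$ and $u_0$. Set $w := u_1 - u_2$, so that $w(x,0)=0$ and $w$ satisfies, in the weak sense, the difference equation
\begin{equation*}
w_t - \operatorname{div}\!\left[a(x,t,u_1,\nabla u_1) - a(x,t,u_2,\nabla u_2)\right] + \left[G(x,t,u_1)-G(x,t,u_2)\right] = 0 .
\end{equation*}
I would then want to test this equation against $w$ itself. However, since $w$ need not be an admissible test function (in particular $w_t$ is only a distribution in a dual space), the first technical step is to justify the integration by parts that yields the key energy identity. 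The rigorous route, following the adaptation of~\cite{alvino} announced in the introduction, is to use a Steklov-type time averaging (or a truncation-in-time regularization) to make $w$ admissible, obtaining for a.e. $t\in(0,T)$ the identity
\begin{equation*}
\tfrac12\!\int_{\Omega}\! w^2(x,t)\,dx + \!\iint_{Q_t}\!\! \left[a(u_1,\nabla u_1)-a(u_2,\nabla u_2)\right]\!\cdot\!\nabla w \,dxd\tau = -\!\iint_{Q_t}\!\! \left[G(u_1)-G(u_2)\right] w \,dxd\tau .
\end{equation*}

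Next I would split the diffusion term by inserting and subtracting $a(x,t,u_1,\nabla u_2)$:
\begin{equation*}
\left[a(u_1,\nabla u_1)-a(u_2,\nabla u_2)\right]\cdot\nabla w = \underbrace{\left[a(u_1,\nabla u_1)-a(u_1,\nabla u_2)\right]\cdot\nabla w}_{\text{(I)}} + \underbrace{\left[a(u_1,\nabla u_2)-a(u_2,\nabla u_2)\right]\cdot\nabla w}_{\text{(II)}} .
\end{equation*}
Term~(I) is bounded below using the strong monotonicity~(\ref{monotonia forte}) with $\varepsilon=0$, giving the coercive quantity $\alpha\iint (|\nabla u_1|+|\nabla u_2|)^{p-2}|\nabla w|^2$. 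Term~(II) is the genuinely dangerous one: I would estimate it from above via the Lipschitz condition~(\ref{lip a}) with $\theta=0$, which bounds $|a(u_1,\nabla u_2)-a(u_2,\nabla u_2)|$ by $\beta[\phi+|\nabla u_2|^{p-1}]|w|$, so that term~(II) is controlled by $\beta\iint [\phi+|\nabla u_2|^{p-1}]\,|w|\,|\nabla w|$. The $G$-term on the right is handled directly by~(\ref{lip c}) and Cauchy--Schwarz, yielding $\varrho\iint |w|^2$.

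The crux — and I expect this to be the main obstacle — is absorbing the bad term~(II) into the good term~(I) despite the degeneracy of the weight $(|\nabla u_1|+|\nabla u_2|)^{p-2}$, which for $1<p<2$ blows up where the gradients are small. This is exactly where the restriction $1<p<2$ and the specific choice $\theta=0$, together with the scaling of the exponents $\gamma,\lambda,r,\rho$ fixed in the introduction, must conspire. The idea is to factor the integrand of (II) as $[\phi+|\nabla u_2|^{p-1}]|w|\,|\nabla w| = \big((|\nabla u_1|+|\nabla u_2|)^{(p-2)/2}|\nabla w|\big)\cdot\big((|\nabla u_1|+|\nabla u_2|)^{(2-p)/2}[\phi+|\nabla u_2|^{p-1}]|w|\big)$ and apply Young's inequality: the first factor reproduces (a small multiple of) the coercive term~(I), which gets absorbed into the left-hand side, while the second factor produces a term of the form $C\iint (|\nabla u_1|+|\nabla u_2|)^{2-p}[\phi+|\nabla u_2|^{p-1}]^2\,w^2$. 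One must then show the coefficient $(|\nabla u_1|+|\nabla u_2|)^{2-p}[\phi+|\nabla u_2|^{p-1}]^2$ lies in a Lebesgue space good enough to be treated as an admissible weight. Here the parabolic embedding $L^p(0,T;W_0^{1,p})\cap L^\infty(0,T;L^2) \hookrightarrow L^{p(N+2)/N}(Q_T)$ — the "main tool" highlighted in the introduction — must be used to control the $w^2$ factor by the energy, so that after applying Hölder with the conjugate exponents one arrives at a differential inequality of the form
\begin{equation*}
\tfrac12\,\frac{d}{dt}\!\int_{\Omega} w^2(x,t)\,dx \;\le\; \psi(t)\int_{\Omega} w^2(x,t)\,dx
\end{equation*}
with $\psi\in L^1(0,T)$, where the integrability of $\psi$ is precisely what the numerology of the assumed exponents guarantees. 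Since $w(x,0)=0$, Gronwall's Lemma then forces $w\equiv 0$, proving $u_1=u_2$. Existence of a solution to close the statement follows from Proposition~\ref{prop esistenza}.
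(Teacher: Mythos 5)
The overall strategy you outline (difference of the equations, inserting $a(x,t,u_1,\nabla u_2)$, strong monotonicity (\ref{monotonia forte}) for one piece, Lipschitz continuity (\ref{lip a}) for the other, Young's inequality with the degenerate weight) matches the paper's up to the choice of test function, but the step you yourself flag as the crux does not go through, and this is a genuine gap rather than a technicality. After your Young inequality the leftover term is $C\iint_{Q_t}(|\nabla u_1|+|\nabla u_2|)^{2-p}\bigl[\phi+|\nabla u_2|^{p-1}\bigr]^2 w^2$, and since $(|\nabla u_1|+|\nabla u_2|)^{2-p}|\nabla u_2|^{2(p-1)}\le(|\nabla u_1|+|\nabla u_2|)^{p}$, that weight is in general only an $L^1(Q_T)$ function: a weak solution carries no better gradient integrability. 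To reach a Gronwall inequality $\frac{d}{dt}\int_\Omega w^2\le\psi(t)\int_\Omega w^2$ with $\psi\in L^1(0,T)$ you would need essentially an $L^\infty_x$ bound on that weight; pairing it instead with the parabolic embedding only places $w^2$ in $L^{p(N+2)/(2N)}(Q_T)$, whose conjugate exponent is strictly larger than $1$, so an $L^1$ weight cannot be H\"oldered against it. The exponents fixed in the theorem do not ``conspire'' to repair this, and no choice of H\"older splitting will.

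The paper sidesteps the absorption problem entirely by not testing with $w$: it argues by contradiction on the set $D=\{w>0\}$, takes $\varphi=T_k(w^+)/k$, and exploits that $\nabla\varphi$ is supported on $D_k=\{0<w^+<k\}$ where moreover $|w|<k$. After Young's inequality the leftover is $\iint_{D_k}(|\nabla u|+|\nabla v|)^{2-p}+\iint_{D_k}(|\nabla u|+|\nabla v|)^{p}$, the integral of a fixed $L^1$ function over a set whose measure tends to $0$ as $k\to0$, so it vanishes by absolute continuity of the integral --- only $L^1$ integrability is needed. Gronwall's lemma is used solely for the zero-order term coming from (\ref{lip c}), and the contradiction $|D|=0$ is then extracted by applying the embedding (\ref{G-N 2}) to $\varphi$ to control $|D\setminus D_k|$. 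To salvage your write-up, replace the test function $w$ by $T_k((u_1-u_2)^+)/k$ and send $k\to0$; the rest of your computation then essentially reproduces the paper's argument.
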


\begin{theorem}
\label{th p>2}Let us assume $p\geq2,$ (\ref{ellittic})-(\ref{crescita}),
(\ref{crescita G})-(\ref{fff}), (\ref{monotonia forte}) with $\varepsilon>0$,
(\ref{lip a}) with $0\leq\theta\leq\frac{p(N+2)}{2N}$ and (\ref{lip c}) hold.
Then there exists a unique weak solution to Problem (\ref{P0}).
\end{theorem}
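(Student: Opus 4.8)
The plan is to obtain existence from Proposition~\ref{prop esistenza} and to prove uniqueness by an energy estimate combined with Gronwall's Lemma. Suppose $u_1$ and $u_2$ are two weak solutions of~\eqref{P0} with the same data, and set $w:=u_1-u_2$, so that $w\in C(0,T;L^2(\Omega))\cap L^p(0,T;W_0^{1,p}(\Omega))$ and $w(\cdot,0)=0$. Subtracting the two weak formulations~\eqref{sol deb}, the data terms $f_0$, $F$, $u_0$ cancel, leaving, for every $t\in(0,T]$,
\begin{align*}
&\int_\Omega wv(x,t)\,dx\\
&\quad+\iint_{Q_t}\Big[-wv_t+\big(a(u_1,\nabla u_1)-a(u_2,\nabla u_2)\big)\nabla v+\big(G(u_1)-G(u_2)\big)v\Big]\,dx\,d\tau=0.
\end{align*}
The natural choice is $v=w$; since $w_t$ is not known to belong to $L^2$, I would first justify this through a Steklov time-averaging (or the standard parabolic integration-by-parts formula), which turns the first two terms into $\tfrac12\|w(t)\|_{L^2(\Omega)}^2$ because $w(0)=0$.

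Next I would split $a(u_1,\nabla u_1)-a(u_2,\nabla u_2)=\big[a(u_1,\nabla u_1)-a(u_1,\nabla u_2)\big]+\big[a(u_1,\nabla u_2)-a(u_2,\nabla u_2)\big]$. The first bracket tested against $\nabla w$ is estimated from below by the strong-monotonicity assumption~\eqref{monotonia forte}; here the hypotheses $\varepsilon>0$ and $p\ge2$ are decisive, since
\begin{align*}
\alpha\iint_{Q_t}(\varepsilon+|\nabla u_1|+|\nabla u_2|)^{p-2}|\nabla w|^2\,dx\,d\tau\ge\alpha\,\varepsilon^{p-2}\iint_{Q_t}|\nabla w|^2\,dx\,d\tau,
\end{align*}
so that an honest $L^2$-coercive control of $\nabla w$ is available; this is exactly what fails in the degenerate range and forces the separate treatment from Theorem~\ref{th chipot}. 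Controlling the second bracket by the Lipschitz condition~\eqref{lip a} and the zero-order difference by~\eqref{lip c} yields
\begin{align*}
&\tfrac12\|w(t)\|_{L^2(\Omega)}^2+\alpha\varepsilon^{p-2}\iint_{Q_t}|\nabla w|^2\,dx\,d\tau\\
&\quad\le\beta\iint_{Q_t}\big[\phi+|\nabla u_2|^{p-1}+(|u_1|+|u_2|)^{\theta}\big]|w|\,|\nabla w|\,dx\,d\tau+\varrho\iint_{Q_t}|w|^2\,dx\,d\tau.
\end{align*}

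The heart of the argument, and the step I expect to be the main obstacle, is the estimate of this right-hand side. Using Young's inequality I would absorb the factors of $|\nabla w|$ into the coercive term on the left, at the cost of the weighted quantities $\iint_{Q_t}(|u_1|+|u_2|)^{2\theta}w^2$ and $\iint_{Q_t}|\nabla u_2|^{p}w^2$ (the latter arising by writing $|\nabla u_2|^{p-1}=|\nabla u_2|\,|\nabla u_2|^{p-2}$ and pairing the weight $|\nabla u_2|^{p-2}$ with the monotonicity term before applying Young). To close these I would invoke the parabolic embedding $L^\infty(0,T;L^2(\Omega))\cap L^2(0,T;W_0^{1,2}(\Omega))\hookrightarrow L^{2(N+2)/N}(Q_T)$ for $w$, together with the a~priori integrability $u_1,u_2\in L^{p(N+2)/N}(Q_T)$ guaranteed by~\eqref{fff} and the same embedding applied to the solutions. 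A slice-wise Hölder inequality followed by a Gagliardo--Nirenberg--Sobolev interpolation of $\|w(\tau)\|_{L^q(\Omega)}$ between $\|\nabla w(\tau)\|_{L^2}$ and $\|w(\tau)\|_{L^2}$, and then one more Young inequality, would split each weighted term into a small multiple of $\iint_{Q_t}|\nabla w|^2$ (absorbed on the left) plus $\int_0^t\eta(\tau)\|w(\tau)\|_{L^2(\Omega)}^2\,d\tau$ with $\eta\in L^1(0,t)$. It is precisely here that the bound $\theta\le\frac{p(N+2)}{2N}$ enters: it is the exact threshold making $(|u_1|+|u_2|)^{2\theta}$ lie in the dual integrability space required by the embedding, so that the resulting $\eta$ is integrable in time. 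The term carrying $|\nabla u_2|^{p}$ is the most delicate, since $|\nabla u_2|^{p}$ belongs only to $L^1(Q_T)$; its control is where the exponent bookkeeping is tightest, and where the interplay of the non-degenerate weight $(\varepsilon+|\nabla u_1|+|\nabla u_2|)^{p-2}$ retained from~\eqref{monotonia forte} with the parabolic embedding (and, if needed, a higher-integrability estimate for $\nabla u_2$ available in the non-degenerate regime) must be exploited.

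Having thus reduced everything to an inequality of the form $\|w(t)\|_{L^2(\Omega)}^2\le C\int_0^t\eta(\tau)\|w(\tau)\|_{L^2(\Omega)}^2\,d\tau$ with $\eta\in L^1(0,T)$, Gronwall's Lemma forces $\|w(t)\|_{L^2(\Omega)}=0$ for every $t\in(0,T]$, whence $u_1\equiv u_2$ and the weak solution of~\eqref{P0} is unique.
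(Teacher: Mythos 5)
Your overall strategy (test with the difference $w$ itself, absorb via strong monotonicity, close with the parabolic embedding and Gronwall) breaks down at exactly the point you flag as ``the most delicate'': the term $\iint_{Q_t}|\nabla u_2|^{p-1}|w||\nabla w|$. After Young's inequality this leaves $\iint_{Q_t}|\nabla u_2|^{p}\,w^2$ on the right-hand side, and no combination of the parabolic embedding of Lemma \ref{G-N Lemma} with Gronwall can digest it: $w\in L^{2(N+2)/N}(Q_T)$ at best, so one would need $|\nabla u_2|^{p}\in L^{(N+2)/2}(Q_T)$, i.e.\ $\nabla u_2\in L^{p(N+2)/2}(Q_T)$, whereas only $\nabla u_2\in L^{p}(Q_T)$ is available and no higher-integrability estimate is assumed or established in this setting. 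The same obstruction makes your reading of the threshold on $\theta$ too optimistic: forcing $(|u_1|+|u_2|)^{2\theta}$ into the dual integrability class of the embedding would require $\theta\le p/N$, which is strictly smaller than the range $\theta\le\frac{p(N+2)}{2N}$ claimed in the theorem.

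The missing idea is the choice of test function. The paper does not test with $w$ but with the normalized truncation $\varphi=T_k(w^+)/k$ and argues by contradiction on the measure of $D=\{w>0\}$. Since $\nabla\varphi$ is supported on $D_k=\{(x,t)\in D:\ w^+<k\}$ and $\bigcap_{k>0}D_k=\emptyset$, the troublesome weights $|\nabla u_2|^{p}$ and $(|u_1|+|u_2|)^{2\theta}$ are only integrated over $D_k$ and vanish as $k\to0$ by absolute continuity of the integral; the hypothesis $2\theta\le\frac{p(N+2)}{N}$ is used merely to guarantee $(|u_1|+|u_2|)^{2\theta}\in L^1(Q_T)$ via Lemma \ref{G-N Lemma}. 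Gronwall's Lemma is still invoked, but only to handle the harmless zero-order contribution $\frac{1}{k^2}\iint_{Q_t}\Psi_k(w^+)$ coming from (\ref{lip c}); the conclusion $|D|=\lim_{k\to0}|D\setminus D_k|=0$ then follows from (\ref{G-N}). Without this localization to the level sets of $w^+$, your energy inequality cannot be closed, so as written the proposal has a genuine gap rather than a complete proof.
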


\begin{remark}
Theorem \ref{th chipot} holds if we replace $\left\vert s-s^{\prime
}\right\vert $ in (\ref{lip a}) and (\ref{lip c}) by $\omega(\left\vert
s-s^{\prime}\right\vert ),$\ where $\omega:\left[  0,+\infty\right[
\rightarrow\left[  0,+\infty\right[  $ is such that $\omega(s)\leq s$ for
$0\leq s\leq\kappa$ for some $\kappa>0.$ Analogues generalization holds for
Theorem \ref{th p>2}.
\end{remark}

Moreover in order to prove uniqueness results for problems with lower order
term $H(x,t,\nabla u)$ we suppose function $a$ does not depend on $u$ and
$G\equiv0.$ More precisely we study the following class of nonlinear parabolic
homogeneous Dirichlet problems%
\begin{equation}
\left\{
\begin{array}
[c]{lll}%
u_{t}-\operatorname{div}a(x,t,\nabla u)+H(x,t,\nabla u)=f & \text{in} &
Q_{T}\\
u(x,t)=0 & \text{in} & \partial\Omega\times\left(  0,T\right) \\
u(x,0)=u_{0}(x) & \text{on} & \Omega,
\end{array}
\right.  \label{P 1}%
\end{equation}
when (\ref{ellittic})-(\ref{crescita H}) and (\ref{fff}) hold. We know (see
\cite{Porzio}) there exists at least a weak solution to Problem (\ref{P 1}).
As usual we assume the following locally Lipschitz condition on $H$%
\begin{equation}
\left\vert H(x,t,\xi)-H(x,t,\xi^{\prime})\right\vert \leq h(x,t)\left(
\eta+\left\vert \xi\right\vert +\left\vert \xi^{\prime}\right\vert \right)
^{\sigma}\left\vert \xi-\xi^{\prime}\right\vert \label{lip H}%
\end{equation}
with $\sigma\in%
%TCIMACRO{\U{211d} }%
%BeginExpansion
\mathbb{R}
%EndExpansion
$, $\eta\geq0$ and $h$ a suitable function.

We investigate separately the case $p<2$ and $p\geq2.$

\begin{theorem}
\label{Th_Hp<2}Let us assume $\frac{2N}{N+2}\leq p<2$, (\ref{ellittic}%
)-(\ref{crescita}), (\ref{crescita H}), (\ref{fff}), (\ref{monotonia forte})
with $\varepsilon=0$ and (\ref{lip H}) with $h\in L^{\infty}(Q_{T}),$ $\eta>0$
and $\sigma\leq\frac{p-2}{2}$ hold. Then there exists a unique weak solution
to Problem (\ref{P 1}).
\end{theorem}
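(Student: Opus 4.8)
The plan is to prove uniqueness by taking two weak solutions $u$ and $w$ to Problem (\ref{P 1}), setting $z=u-w$, and estimating a suitable energy quantity using $z$ (or a truncation/power of $z$) as a test function. First I would subtract the two weak formulations (\ref{sol deb}) to obtain, for every $t$,
\begin{equation*}
\int_{\Omega}zv(x,t)\,dx+\iint_{Q_{t}}\left[-zv_{t}+\left(a(x,\tau,\nabla u)-a(x,\tau,\nabla w)\right)\nabla v+\left(H(x,\tau,\nabla u)-H(x,\tau,\nabla w)\right)v\right]dxd\tau=0,
\end{equation*}
using that the data $u_{0}$, $f_{0}$, $F$ coincide. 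The natural test function is $v=z$ itself, which produces the time term $\frac{1}{2}\int_{\Omega}z^{2}(x,t)\,dx$, the strongly monotone diffusion term controlled below by $\alpha\iint_{Q_{t}}(|\nabla u|+|\nabla w|)^{p-2}|\nabla z|^{2}\,dx\,d\tau$ via (\ref{monotonia forte}) with $\varepsilon=0$, and the lower-order contribution $\iint_{Q_{t}}(H(x,\tau,\nabla u)-H(x,\tau,\nabla w))\,z\,dx\,d\tau$. (A standard approximation/Steklov-averaging argument is needed to justify using $z$ as a test function since $z_t$ is only a distribution; I would invoke the parabolic framework of \cite{di benedetto} and approximate in the usual way.)

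The heart of the matter is to absorb the lower-order term into the good diffusion term. Using the locally Lipschitz bound (\ref{lip H}), I would estimate
\begin{equation*}
\left|H(x,\tau,\nabla u)-H(x,\tau,\nabla w)\right|\,|z|\leq \|h\|_{\infty}\left(\eta+|\nabla u|+|\nabla w|\right)^{\sigma}|\nabla z|\,|z|.
\end{equation*}
The key algebraic observation is that the exponent $\sigma\leq\frac{p-2}{2}$ is chosen precisely so that $(\eta+|\nabla u|+|\nabla w|)^{\sigma}$ matches half the weight appearing in the strong monotonicity. Writing $(\eta+|\nabla u|+|\nabla w|)^{\sigma}|\nabla z|=(\eta+|\nabla u|+|\nabla w|)^{\sigma-\frac{p-2}{2}}\cdot(\eta+|\nabla u|+|\nabla w|)^{\frac{p-2}{2}}|\nabla z|$, the first factor is bounded (since $\sigma-\frac{p-2}{2}\leq0$ and $\eta>0$ keeps the base away from zero), so a weighted Young inequality yields
\begin{equation*}
\left|H(x,\tau,\nabla u)-H(x,\tau,\nabla w)\right|\,|z|\leq \frac{\alpha}{2}\left(\eta+|\nabla u|+|\nabla w|\right)^{p-2}|\nabla z|^{2}+C\,z^{2},
\end{equation*}
with $C$ depending on $\|h\|_{\infty}$, $\alpha$ and $\eta$. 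The half of the monotonicity term is absorbed on the left, leaving the time term plus $\frac{\alpha}{2}$ of the diffusion term on the left and only $C\iint_{Q_{t}}z^{2}\,dx\,d\tau$ on the right.

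This reduces the estimate to $\int_{\Omega}z^{2}(x,t)\,dx\leq 2C\iint_{Q_{t}}z^{2}\,dx\,d\tau$, so Gronwall's Lemma (with $z(x,0)=0$ from the common initial datum) forces $z\equiv0$, giving uniqueness. The main obstacle I anticipate is twofold: first, the condition $p\geq\frac{2N}{N+2}$ and the role of $\eta>0$ must be used carefully to ensure all the integrals are finite and the weighted factors are genuinely bounded — in particular one must verify that the terms live in the right Lebesgue/Sobolev spaces guaranteed by the solution class $C(0,T;L^{2}(\Omega))\cap L^{p}(0,T;W_{0}^{1,p}(\Omega))$ and by the parabolic embedding; second, the rigorous justification of the test-function/time-derivative argument (the Steklov averaging) requires care since $z$ does not a priori have an admissible time derivative. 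Once these technical points are dispatched, the Gronwall step is routine.
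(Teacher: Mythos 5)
Your proposal is essentially correct, but it closes the estimate by a genuinely different route than the paper. Both arguments begin the same way: subtract the two weak formulations, test (after a Steklov-averaging justification, which the paper also leaves implicit) with the difference of the solutions, bound the diffusion term from below by $\alpha\iint(|\nabla u|+|\nabla v|)^{p-2}|\nabla z|^{2}$ via (\ref{monotonia forte}) with $\varepsilon=0$, and treat the first-order term by (\ref{lip H}) and a weighted Young inequality, splitting $(\eta+|\nabla u|+|\nabla v|)^{\sigma}|\nabla z|$ exactly as you do --- this is the paper's inequality (\ref{int 4}), and $\sigma\le\frac{p-2}{2}$ together with $\eta>0$ plays precisely the role you identify. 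Two small points you should make explicit: the absorbed term carries the weight $(\eta+|\nabla u|+|\nabla v|)^{p-2}$, which is dominated by the monotonicity weight $(|\nabla u|+|\nabla v|)^{p-2}$ only because $p<2$; and the absorption is legitimate because $(|\nabla u|+|\nabla v|)^{p-2}|\nabla z|^{2}\le(|\nabla u|+|\nabla v|)^{p}\in L^{1}(Q_{T})$, so the term being subtracted is finite. The divergence comes after the inequality $\int_{\Omega}z^{2}(t)\,dx\le C\iint_{Q_{t}}z^{2}$ (the analogue of the paper's (\ref{quasi fine})): you conclude directly by Gronwall, whereas the paper argues by contradiction with the truncations $w_{k}=(w^{+}-k)^{+}$ on the shrinking sets $E_{k}$, applies the parabolic Gagliardo--Nirenberg inequality (\ref{G-N 2}) and H\"{o}lder's inequality to reach $1\le c\bigl(\iint_{E_{k}}(|\nabla u|+|\nabla v|)^{(2-p)N/2}\bigr)^{2/(N+2)}$, and lets $k\uparrow\sup_{D}w^{+}$. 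The hypothesis $p\ge\frac{2N}{N+2}$ enters only in that last embedding step, to guarantee $(|\nabla u|+|\nabla v|)^{(2-p)N/2}\in L^{1}(Q_{T})$; your Gronwall route does not appear to use it, so, once the test-function technicality is dispatched, your argument is both shorter and formally more general. The truncation structure of the paper's proof is what it reuses verbatim for the comparison principle (Corollary \ref{C2}), but your argument applied to $(u-v)^{+}$ yields that as well.
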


\begin{theorem}
\label{Th_Hp>2}Let us assume (\ref{ellittic})-(\ref{crescita}),
(\ref{crescita H}), (\ref{fff}), (\ref{monotonia forte}) with $\varepsilon>0$
and (\ref{lip H}) with $h\in L^{r}(Q_{T})$ for $r\geq N+2,$ $\eta=0$ and
$0\leq\sigma\leq p\left(  \frac{1}{N+2}-\frac{1}{r}\right)  +\frac{p-2}{2}$
hold. Then for $2\leq p\leq\frac{2r(N+2)}{r(N+2)+2(N+2)-2r}$ there exists a
unique weak solution to Problem (\ref{P 1}).
\end{theorem}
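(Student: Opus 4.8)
The plan is to take existence for granted (it is quoted from \cite{Porzio} under (\ref{ellittic})--(\ref{crescita H}), (\ref{fff})) and to concentrate on uniqueness. Suppose $u_1,u_2$ are two weak solutions, set $w:=u_1-u_2$ and $M:=|\nabla u_1|+|\nabla u_2|$, and subtract the two weak formulations (\ref{sol deb}). Since the solutions share the same datum and the same initial value, $w(\cdot,0)=0$; I would take $w$ itself as test function, the delicate point being that $w$ belongs only to $C(0,T;L^2(\Omega))\cap L^p(0,T;W_0^{1,p}(\Omega))$, so the time term must be handled by Steklov averaging (or by regularising in time) before passing to the limit. This produces, for every $t\in(0,T]$, the energy identity
\begin{equation*}
\tfrac12\int_\Omega w^2(x,t)\,dx+\iint_{Q_t}\bigl(a(\nabla u_1)-a(\nabla u_2)\bigr)\nabla w\,dxd\tau=-\iint_{Q_t}\bigl(H(\nabla u_1)-H(\nabla u_2)\bigr)w\,dxd\tau .
\end{equation*}

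Next I would bound the two space--time integrals from opposite sides. The strong monotonicity (\ref{monotonia forte}) gives the coercive lower bound $\iint_{Q_t}(a(\nabla u_1)-a(\nabla u_2))\nabla w\ge\alpha\,D(t)$, where $D(t):=\iint_{Q_t}(\varepsilon+M)^{p-2}|\nabla w|^2$, while (\ref{lip H}) with $\eta=0$ bounds the right-hand side by $\iint_{Q_t}h\,M^{\sigma}|\nabla w|\,|w|$. Writing $hM^\sigma|\nabla w||w|=\bigl[(\varepsilon+M)^{(p-2)/2}|\nabla w|\bigr]\cdot\bigl[hM^\sigma(\varepsilon+M)^{-(p-2)/2}|w|\bigr]$ and applying Young's inequality absorbs half of $\alpha D(t)$ on the left and leaves
\begin{equation*}
\tfrac12\|w(t)\|_{L^2(\Omega)}^2+\tfrac{\alpha}{2}D(t)\le C\,J(t),\qquad J(t):=\iint_{Q_t}h^2M^{2\sigma}(\varepsilon+M)^{-(p-2)}|w|^2 .
\end{equation*}
Here $\varepsilon>0$ is essential: since $p\ge2$ one has $(\varepsilon+M)^{p-2}\ge\varepsilon^{p-2}$, so $D(t)$ controls $\iint_{Q_t}|\nabla w|^2$, and together with $w\in L^\infty(0,T;L^2(\Omega))$ the parabolic Gagliardo--Nirenberg embedding yields $w\in L^{2(N+2)/N}(Q_t)$ with $\|w\|_{L^{2(N+2)/N}(Q_t)}^2\le C\bigl(\sup_{\tau\le t}\|w(\tau)\|_{L^2}^2+D(t)\bigr)$.

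It then remains to estimate $J(t)$. Using $|\nabla w|\le M\le\varepsilon+M$ I would bound $M^{2\sigma}(\varepsilon+M)^{-(p-2)}\le M^{2\sigma-p+2}$ (the case $2\sigma<p-2$ is easier, the weight then being bounded) and apply Hölder's inequality,
\begin{equation*}
J(t)\le\bigl\|h^2M^{2\sigma-p+2}\bigr\|_{L^{(N+2)/2}(Q_t)}\,\|w\|_{L^{2(N+2)/N}(Q_t)}^2 .
\end{equation*}
Finiteness of the first factor, obtained by splitting it through $h\in L^r(Q_T)$ and $\nabla u_i\in L^p(Q_T)$, is exactly the requirement $\tfrac{N+2}{r}+\tfrac{(N+2)(2\sigma-p+2)}{2p}\le1$, i.e. the admissible range $\sigma\le p(\tfrac1{N+2}-\tfrac1r)+\tfrac{p-2}2$; insisting that the parabolic embedding exponent $2(N+2)/N$ match these Hölder indices (all conjugate indices $\ge1$ and the embedding applicable) is what forces the stated ceiling on $p$. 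Since $h^2M^{2\sigma-p+2}\in L^{(N+2)/2}(Q_T)$, its norm over $Q_t$ tends to $0$ as $t\to0^+$; combining the two displays gives $\mathcal E(t)\le\omega(t)\,\mathcal E(t)$ with $\mathcal E(t):=\sup_{\tau\le t}\|w(\tau)\|_{L^2}^2+D(t)$ and $\omega(t)\to0$. Hence $\mathcal E\equiv0$ on a first interval $[0,t_1]$, and since $t_1$ depends only on the structural constants I would iterate over $[t_1,2t_1],\dots$ to cover $[0,T]$ and conclude $w\equiv0$.

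The main obstacle is precisely this bookkeeping of exponents: one must check that the single space--time integrability $\nabla u_i\in L^p(Q_T)$ together with $h\in L^r(Q_T)$ is enough, through Hölder, to place $h^2M^{2\sigma-p+2}$ in the dual $L^{(N+2)/2}$ of $|w|^2\in L^{(N+2)/N}$, and it is this one balance that simultaneously produces the bound on $\sigma$ and the bound on $p$. A secondary but genuine technical point is the rigorous justification of using $w$ as a test function (time regularisation) and the verification that $(H(\nabla u_1)-H(\nabla u_2))w$ is integrable on $Q_t$, which again reduces to the same exponent inequalities and uses $p\ge2$.
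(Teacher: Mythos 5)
Your proposal is correct in substance, but it follows a genuinely different route from the paper. The paper argues by contradiction with the level-set test function $w_k=(w^+-k)^+$ supported on $E_k=\{k<w^+<\sup_D w^+\}$: after the same strong-monotonicity lower bound and the same three-factor H\"older splitting $\bigl(hM^{\sigma-\frac{p-2}{2}}\bigr)\cdot\bigl((\varepsilon+M)^{\frac{p-2}{2}}|\nabla w_k|\bigr)\cdot w_k$ with exponents $N+2,\,2,\,\tfrac{2(N+2)}{N}$ and the parabolic embedding, it cancels the energy from both sides to reach $1\leq C\bigl(\iint_{E_k}(h(|\nabla u|+|\nabla v|)^{\sigma-\frac{p-2}{2}})^{N+2}\bigr)^{1/(N+2)}$, and the contradiction comes from $|E_k|\to0$ as $k\uparrow\sup_D w^+$. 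You instead test with $w$ itself, absorb by Young's inequality, and obtain smallness by shrinking the \emph{time} interval (absolute continuity of $\|h^2M^{2\sigma-p+2}\|_{L^{(N+2)/2}(Q_t)}$ as $t\to0$) followed by continuation; this is a direct, non-contradiction argument resting on exactly the same exponent balance $\tfrac{N+2}{r}+\tfrac{(\sigma-\frac{p-2}{2})(N+2)}{p}\leq1$, and it transfers verbatim to $w=(u-v)^+$ for the comparison principle. Two small remarks. First, the ceiling $p\leq\tfrac{2r(N+2)}{r(N+2)+2(N+2)-2r}$ does not come from matching the embedding indices, as you suggest: in the paper it is exactly the condition $\tfrac{p-2}{2}\leq p(\tfrac1{N+2}-\tfrac1r)$ ensuring that the two sub-cases $\sigma\geq\tfrac{p-2}{2}$ and $0\leq\sigma<\tfrac{p-2}{2}$ together cover the whole stated $\sigma$-range; in your scheme the small-$\sigma$ case is handled by bounding the weight by $\varepsilon^{2\sigma-p+2}$ and only needs $h\in L^{N+2}(Q_T)$, so you do not actually use the ceiling (it is a hypothesis anyway, so no harm). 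Second, your $t_1$ is determined by the modulus of absolute continuity of a fixed integrable function rather than by structural constants alone, but the finite partition of $[0,T]$ this yields is exactly the splitting device the paper itself employs in the proof of Lemma \ref{lemma stima}, so the continuation step is sound.
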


\begin{remark}
If $p>\frac{2r(N+2)}{r(N+2)+2(N+2)-2r},$ then Theorem \ref{Th_Hp>2} \ holds
with $0\leq\sigma\leq\frac{p}{N+2}-\frac{p}{r}$ and $\frac{p-2}{2}\leq
\sigma\leq\frac{p}{N+2}-\frac{p}{r}+\frac{p-2}{2}.$
\end{remark}

\begin{remark}
\label{remark c} If in Problem (\ref{P 1}) we add extra term $G(x,t,u)$\ that
is an increasing function in the variable $u$, the uniqueness of weak
solutions can be proved under hypothesis of Theorems \ref{Th_Hp<2} and
\ref{Th_Hp>2}$.$
\end{remark}

\bigskip

The arguments used in the proofs of previous theorems allows us to obtain also
some comparison principles.

\begin{corollary}
\label{C1}\emph{(Comparison principle)} In the hypothesis of Theorems
\ref{th chipot} and \ref{th p>2}, let us assume u and v are two solutions to
Problem (\ref{P0}) such that $u(x,0)\leq v(x,0)$ a.e. in $\Omega.$ Then
$u_{1}\leq u_{2}$ a.e. in $Q_{T}.$
\end{corollary}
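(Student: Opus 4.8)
The plan is to run the argument of Theorems~\ref{th chipot} and \ref{th p>2} with the test function replaced by the positive part of the difference of the two solutions, and to use the ordering of the initial data to annihilate the boundary contribution at $\tau=0$. Denote the two solutions by $u_{1}$ and $u_{2}$ with $u_{1}(x,0)\le u_{2}(x,0)$, set $w=u_{1}-u_{2}$, and write the weak formulation (\ref{sol deb}) for $u_{1}$ and for $u_{2}$ against a common test function $\psi$; subtracting the two identities gives, for every $t\in(0,T]$,
\begin{align*}
\int_{\Omega}w\,\psi(x,t)\,dx+\iint_{Q_{t}}\big[-w\,\psi_{\tau}+(a(u_{1},\nabla u_{1})-a(u_{2},\nabla u_{2}))\cdot\nabla\psi+(G(u_{1})-G(u_{2}))\psi\big]\,dx\,d\tau=\int_{\Omega}w(x,0)\,\psi(x,0)\,dx .
\end{align*}
Since $w\in L^{p}(0,T;W_{0}^{1,p}(\Omega))$, its positive part $w^{+}=(u_{1}-u_{2})^{+}$ belongs to the same space, so that the choice $\psi=w^{+}$ is admissible.

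For the parabolic term I would use the standard integration-by-parts rule for the time derivative applied to $w^{+}$, which turns $\int_{\Omega}w\,w^{+}(x,t)\,dx-\iint_{Q_{t}}w\,w^{+}_{\tau}\,dx\,d\tau$ into $\tfrac12\int_{\Omega}|w^{+}(t)|^{2}\,dx-\tfrac12\int_{\Omega}|w^{+}(0)|^{2}\,dx$. Here the hypothesis $u_{1}(x,0)\le u_{2}(x,0)$ forces $w^{+}(0)=0$, so only the nonnegative term at time $t$ survives. All remaining integrals are effectively taken over the set $\{w>0\}=\{u_{1}>u_{2}\}$, on which $\nabla w^{+}=\nabla u_{1}-\nabla u_{2}$. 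On this set I would split the principal part as $a(u_{1},\nabla u_{1})-a(u_{2},\nabla u_{2})=[a(u_{1},\nabla u_{1})-a(u_{1},\nabla u_{2})]+[a(u_{1},\nabla u_{2})-a(u_{2},\nabla u_{2})]$: tested against $\nabla w^{+}$, the first bracket is bounded below by the coercive quantity $\alpha(\varepsilon+|\nabla u_{1}|+|\nabla u_{2}|)^{p-2}|\nabla w|^{2}$ coming from the strong monotonicity (\ref{monotonia forte}), the second bracket is controlled by the Lipschitz continuity (\ref{lip a}) in $u$, and the lower order term obeys $|(G(u_{1})-G(u_{2}))w^{+}|\le\varrho|w^{+}|^{2}$ by (\ref{lip c}). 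Collecting these estimates yields an inequality of the form
\[
\tfrac12\int_{\Omega}|w^{+}(t)|^{2}\,dx+\alpha\iint_{Q_{t}\cap\{w>0\}}(\varepsilon+|\nabla u_{1}|+|\nabla u_{2}|)^{p-2}|\nabla w|^{2}\,dx\,d\tau\le R+\varrho\iint_{Q_{t}}|w^{+}|^{2}\,dx\,d\tau,
\]
where $R$ is the remainder produced by (\ref{lip a}).

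The main obstacle is exactly the one already overcome in Theorems~\ref{th chipot} and \ref{th p>2}: the remainder $R$ must be absorbed into the coercive term. This is achieved by Young's inequality together with the parabolic Gagliardo--Nirenberg embedding of $L^{\infty}(0,t;L^{2})\cap L^{p}(0,t;W_{0}^{1,p})$, and it is precisely here that the restrictions on $\theta$ (and the sign and size of $\varepsilon$) in the two cases $1<p<2$ and $p\ge2$ are used. Since $w^{+}$ has the same integrability as $w$ and every integral lives on $\{w>0\}$, the very same computation carries over verbatim and leaves a bound $\int_{\Omega}|w^{+}(t)|^{2}\,dx\le C\int_{0}^{t}\int_{\Omega}|w^{+}|^{2}\,dx\,d\tau$. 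Gronwall's Lemma then forces $\int_{\Omega}|w^{+}(t)|^{2}\,dx=0$ for every $t\in(0,T]$, hence $w^{+}\equiv0$, that is $u_{1}\le u_{2}$ a.e. in $Q_{T}$.
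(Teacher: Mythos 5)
Your overall plan --- subtract the two weak formulations, test with a nonnegative function supported on $\{u_{1}>u_{2}\}$, and use $u_{1}(x,0)\le u_{2}(x,0)$ to annihilate the contribution at $\tau=0$ --- is exactly the paper's (one-line) proof, which reruns the uniqueness arguments with $w=(u_{1}-u_{2})^{+}$. The observation that the ordering of the initial data forces $w^{+}(\cdot,0)=0$ is indeed the only new ingredient needed, and you have it.

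The execution, however, has a genuine gap: you replace the paper's test function $\varphi=T_{k}(w^{+})/k$ by $w^{+}$ itself and assert that ``the very same computation carries over verbatim'' and yields $\int_{\Omega}|w^{+}(t)|^{2}\,dx\le C\int_{0}^{t}\int_{\Omega}|w^{+}|^{2}\,dx\,d\tau$. It does not. The term produced by the Lipschitz condition (\ref{lip a}) is
\[
\beta\iint_{Q_{t}\cap\{w>0\}}\bigl(\phi+|\nabla u_{2}|^{p-1}+(|u_{1}|+|u_{2}|)^{\theta}\bigr)\,w^{+}\,|\nabla w^{+}|\,dx\,d\tau,
\]
and after Young's inequality against the coercive term the remainder $R$ carries the weight $\bigl(\phi+|\nabla u_{2}|^{p-1}+(|u_{1}|+|u_{2}|)^{\theta}\bigr)^{2}\bigl(\varepsilon+|\nabla u_{1}|+|\nabla u_{2}|\bigr)^{2-p}$ multiplying $(w^{+})^{2}$. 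Since the solutions only satisfy $\nabla u_{i}\in L^{p}(Q_{T})$, this weight (which behaves like $|\nabla u_{2}|^{2(p-1)}(\varepsilon+\cdots)^{2-p}$) is neither bounded nor, for any $p>1$ and $N\ge2$, in the dual Lebesgue class of $(w^{+})^{2}\in L^{p(N+2)/(2N)}(Q_{T})$; hence it cannot be dominated by $C\int_{0}^{t}\int_{\Omega}(w^{+})^{2}$, and the Gronwall inequality you invoke is never established. This is precisely the difficulty the normalized truncation removes: with $\varphi=T_{k}(w^{+})/k$ one has $|u_{1}-u_{2}|<k$ on the set $D_{k}$ where $\nabla\varphi\neq0$, so the factor $|u_{1}-u_{2}|$ coming from (\ref{lip a}) cancels the $1/k$ and leaves $\beta\iint_{D_{k}}(\phi+|\nabla u_{2}|^{p-1}+\cdots)|\nabla\varphi|$, whose Young remainders are integrals over $D_{k}=\{0<w^{+}<k\}$ only; these tend to $0$ as $k\to0$ (this is (\ref{lim 00}) and (\ref{lim 0})), and the conclusion $|\{w>0\}|=0$ is reached through the limit $k\to0$ in (\ref{1})--(\ref{lim D}), with Gronwall applied not to $\int_{\Omega}|w^{+}(t)|^{2}$ but to $\frac{1}{k^{2}}\int_{\Omega}\Psi_{k}(w^{+}(t))$, and only in order to absorb the zero-order term coming from (\ref{lip c}). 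To repair your argument, keep your reduction to $w=(u_{1}-u_{2})^{+}$ and the cancellation at $\tau=0$, but then follow the truncation scheme of the uniqueness proofs literally instead of the direct energy estimate.
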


\begin{corollary}
\label{C2}\emph{(Comparison principle)} In the hypothesis of Theorems
\ref{Th_Hp<2} and \ref{Th_Hp>2}, let us assume u and v are two solutions to
Problem (\ref{P 1}) such that $u(x,0)\leq v(x,0)$ a.e. in $\Omega.$ Then
$u_{1}\leq u_{2}$ a.e. in $Q_{T}.$
\end{corollary}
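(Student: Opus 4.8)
The plan is to re-run the uniqueness argument of Theorems \ref{Th_Hp<2} and \ref{Th_Hp>2} with the positive part of the difference in place of the full difference. Let $u$ and $v$ be two weak solutions of Problem (\ref{P 1}) and set $w=u-v$. Subtracting the two weak formulations (\ref{sol deb}) (here $G\equiv0$ and $a$ is independent of $u$) gives, for every $t\in(0,T]$ and every admissible test function $\varphi$,
\begin{align*}
&\int_{\Omega} w\,\varphi(x,t)\,dx + \iint_{Q_t}\Big[-w\,\varphi_\tau + \big(a(x,\tau,\nabla u)-a(x,\tau,\nabla v)\big)\cdot\nabla\varphi\\
&\qquad + \big(H(x,\tau,\nabla u)-H(x,\tau,\nabla v)\big)\varphi\Big]\,dx\,d\tau = 0 .
\end{align*}

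First I would take $\varphi=w^{+}=(u-v)^{+}$; this choice requires a Steklov time-averaging (Landes regularization), since $w^{+}$ has no a priori time regularity. The parabolic integration-by-parts formula then converts the first term together with $-w\,\varphi_\tau$ into $\tfrac12\int_{\Omega}|w^{+}(x,t)|^{2}\,dx-\tfrac12\int_{\Omega}|w^{+}(x,0)|^{2}\,dx$, and the hypothesis $u(x,0)\le v(x,0)$ a.e. forces $w^{+}(x,0)=0$, so only the first term survives. On $\{w>0\}$ one has $\nabla w^{+}=\nabla w$, and $\nabla w^{+}=0$ elsewhere, so the strong monotonicity (\ref{monotonia forte}) yields the coercive lower bound
\[
\iint_{Q_t}\big(a(x,\tau,\nabla u)-a(x,\tau,\nabla v)\big)\cdot\nabla w^{+}\,dx\,d\tau \ge \alpha\iint_{\{w>0\}}\big(\varepsilon+|\nabla u|+|\nabla v|\big)^{p-2}|\nabla w|^{2}\,dx\,d\tau .
\]

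For the lower order term I would estimate, using (\ref{lip H}),
\[
\left|\iint_{Q_t}\big(H(x,\tau,\nabla u)-H(x,\tau,\nabla v)\big)w^{+}\,dx\,d\tau\right| \le \iint_{\{w>0\}} h\,(\eta+|\nabla u|+|\nabla v|)^{\sigma}|\nabla w|\,w^{+}\,dx\,d\tau ,
\]
and then split the integrand by Young's inequality so as to reconstruct the factor $(\varepsilon+|\nabla u|+|\nabla v|)^{(p-2)/2}|\nabla w|$. One part of the $H$-contribution is absorbed into the coercive term above, while the remainder has the form $C\iint_{\{w>0\}} h^{2}(\eta+\cdots)^{2\sigma-(p-2)}|w^{+}|^{2}$, which is controlled by Hölder's inequality and the parabolic Gagliardo--Nirenberg embedding $L^{p}(0,T;W_{0}^{1,p}(\Omega))\cap L^{\infty}(0,T;L^{2}(\Omega))\hookrightarrow L^{p(N+2)/N}(Q_T)$ — precisely the embedding used in the uniqueness proofs. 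The balance of the exponents $\sigma,p,r$ and the assumption $\eta>0$ (resp. $\varepsilon>0$) guarantee that this absorption is admissible and, after integrating in time, produce a bound
\[
\int_{\Omega}|w^{+}(x,t)|^{2}\,dx \le C\int_{0}^{t}\Phi(\tau)\left(\int_{\Omega}|w^{+}(x,\tau)|^{2}\,dx\right)d\tau, \qquad \Phi\in L^{1}(0,T) .
\]
Gronwall's Lemma then gives $w^{+}\equiv0$, that is $u\le v$ a.e. in $Q_T$.

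I expect the absorption of the $H$-term to be the main obstacle: it is exactly the delicate exponent bookkeeping underlying Theorems \ref{Th_Hp<2} and \ref{Th_Hp>2}, and the only new feature here is to perform it on the set $\{w>0\}$ instead of on all of $Q_T$. Since $w^{+}$ and $\nabla w^{+}$ vanish off this set, every estimate from the uniqueness proof restricts verbatim; the comparison principle thus follows by repeating that argument with $w^{+}$ in place of $w$, the sign hypothesis on the initial data supplying the vanishing boundary term $w^{+}(x,0)=0$.
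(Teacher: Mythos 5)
Your overall strategy --- rerun the uniqueness argument with $w=(u-v)^{+}$ in place of $u-v$, the hypothesis $u(\cdot,0)\le v(\cdot,0)$ killing the initial term, and observe that every estimate localizes to $\{w>0\}$ --- is exactly the paper's proof of this corollary. The problem is the step where you claim that H\"older plus the parabolic embedding convert the leftover term $C\iint h^{2}(\eta+|\nabla u|+|\nabla v|)^{2\sigma-(p-2)}|w^{+}|^{2}$ into a Gronwall inequality with a kernel $\Phi\in L^{1}(0,T)$. In the regime of Theorem \ref{Th_Hp<2} this is fine, but only because $h\in L^{\infty}(Q_T)$, $\eta>0$ and $2\sigma\le p-2$ make the whole coefficient a bounded function, so $\Phi$ is a constant and the embedding is not needed at all. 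In the regime of Theorem \ref{Th_Hp>2}, however, $\eta=0$ and $h$ is only in $L^{r}(Q_{T})$, so the coefficient is unbounded in $(x,t)$; a Gronwall kernel would require it to lie in $L^{1}(0,T;L^{\infty}(\Omega))$, which is not available. What H\"older and (\ref{G-N}) actually give is
\[
\iint_{Q_{t}\cap\{w>0\}} h^{2}\bigl(|\nabla u|+|\nabla v|\bigr)^{2\sigma-(p-2)}|w^{+}|^{2}
\le K\Bigl(\underset{0<\tau<t}{\sup}\int_{\Omega}|w^{+}(\tau)|^{2}+\iint_{Q_{t}}|\nabla w^{+}|^{2}\Bigr),
\qquad
K=C\Bigl(\iint_{Q_T}\bigl(h(|\nabla u|+|\nabla v|)^{\sigma-\frac{p-2}{2}}\bigr)^{N+2}\Bigr)^{\frac{2}{N+2}},
\]
i.e.\ control by the \emph{full} parabolic energy, not by $\int_{0}^{t}\Phi(\tau)\int_{\Omega}|w^{+}(\tau)|^{2}\,d\tau$. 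Absorbing this into the left-hand side requires $K$ to be small, and the hypotheses only make it finite.

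This is precisely why the proofs of Theorems \ref{Th_Hp<2} and \ref{Th_Hp>2} are run by contradiction with the truncations $w_{k}$ and the sets $E_{k}=\{k<w^{+}<\sup_{D}w^{+}\}$: as $k\uparrow\sup_{D}w^{+}$ one has $|E_{k}|\to0$, the constant $K$ computed over $E_{k}$ tends to $0$, and the resulting inequality $1\le(\text{small})$ is the contradiction. To repair your write-up, either keep that structure verbatim with $w=(u-v)^{+}$ (the paper's route --- the sign hypothesis on the initial data is used exactly where you use it, to get $w_k(\cdot,0)=0$ in (\ref{derivata tempo})), or partition $(0,T)$ into finitely many subintervals on which the $L^{N+2}$-norm of $h(|\nabla u|+|\nabla v|)^{\sigma-\frac{p-2}{2}}$ is small enough for absorption, conclude $w^{+}\equiv0$ on the first subinterval, and iterate. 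The remaining ingredients of your argument (Steklov regularization, the integration by parts, the lower bound from (\ref{monotonia forte}) restricted to $\{w>0\}$) are correct.
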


\section{Operators with a zero order term}

In this section we study Problem (\ref{P0}) when (\ref{ellittic}%
)-(\ref{monotonia}) and (\ref{crescita G})-(\ref{fff}) hold.

\subsection{Some preliminary results}

In order to prove theorems of the previous section we need to recall the
following embedding in the parabolic framework.

\begin{lemma}
\label{G-N Lemma} (see Proposition 3.1 of \cite{di benedetto}) Let $u\in
L^{\infty}(0,T;L^{\rho}(\Omega))\cap L^{p}(0,T;W_{0}^{1,p}(\Omega))$ with
$p\geq1$ and $\rho\geq1.$ Then $u\in L^{q}(Q_{T})$ with $q=p\frac{N+\rho}{N}$
and there exists a constant $C_{p}$ that depends on $N$ and $p$ such that
\begin{equation}
\left\Vert u\right\Vert _{L^{q}(Q_{T})}\leq C_{p}\left(  \underset{0<t<T}%
{\sup}\left\Vert u(.,t)\right\Vert _{L^{\rho}(\Omega)}+\left\Vert \nabla
u\right\Vert _{L^{p}(Q_{T})}\right)  . \label{G-N}%
\end{equation}
Moreover it results
\begin{equation}%
%TCIMACRO{\diint _{Q_{T}}}%
%BeginExpansion
{\displaystyle\iint_{Q_{T}}}
%EndExpansion
\left\vert u\right\vert ^{q}\leq C_{p}^{q}\left(  \underset{0<t<T}{\sup}%
\int_{\Omega}\left\vert u\right\vert ^{\rho}\right)  ^{\frac{p}{N}}%
%TCIMACRO{\diint _{Q_{T}}}%
%BeginExpansion
{\displaystyle\iint_{Q_{T}}}
%EndExpansion
\left\vert \nabla u\right\vert ^{p}. \label{G-N 2}%
\end{equation}

\end{lemma}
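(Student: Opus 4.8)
The plan is to reduce the space--time estimate to a purely spatial interpolation inequality applied on almost every time slice. For fixed $t\in(0,T)$ the function $u(\cdot,t)$ belongs to $W_0^{1,p}(\Omega)$, so extending it by zero outside $\Omega$ and invoking the Gagliardo--Nirenberg--Sobolev inequality on $\mathbb{R}^N$ gives, for a suitable interpolation exponent $\vartheta\in[0,1]$,
\[
\|u(\cdot,t)\|_{L^q(\Omega)}\leq C\,\|\nabla u(\cdot,t)\|_{L^p(\Omega)}^{\vartheta}\,\|u(\cdot,t)\|_{L^\rho(\Omega)}^{1-\vartheta},
\]
where $\vartheta$ is fixed by the scaling relation $\frac1q=\vartheta\big(\frac1p-\frac1N\big)+(1-\vartheta)\frac1\rho$ and $C=C(N,p)$. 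The homogeneous form, with no additive lower-order term on the right, is available precisely because the zero boundary values let us control $u$ by $\nabla u$ alone.

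The decisive choice is to select the target exponent $q$ so that $\vartheta q=p$. With $q=p\frac{N+\rho}{N}$ one computes $\vartheta=p/q=\frac{N}{N+\rho}$, and a direct substitution confirms that this value is compatible with the scaling identity above; moreover $(1-\vartheta)q=p\frac{\rho}{N}$. Raising the slice inequality to the power $q$ and integrating over $(0,T)$ then yields
\[
\int_0^T\!\!\int_\Omega |u|^q\,dx\,dt\leq C^q\int_0^T \|\nabla u(\cdot,t)\|_{L^p(\Omega)}^{p}\,\|u(\cdot,t)\|_{L^\rho(\Omega)}^{p\rho/N}\,dt.
\]
Bounding the slow factor by its supremum, $\|u(\cdot,t)\|_{L^\rho(\Omega)}^{p\rho/N}=\big(\int_\Omega|u|^\rho\big)^{p/N}\leq\big(\sup_{0<t<T}\int_\Omega|u|^\rho\big)^{p/N}$, and pulling it out of the integral leaves exactly $\int_0^T\|\nabla u\|_{L^p(\Omega)}^p\,dt$, which is (\ref{G-N 2}). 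Then (\ref{G-N}) follows by taking $q$-th roots, which produces the multiplicative bound $\|u\|_{L^q(Q_T)}\leq C\big(\sup_{0<t<T}\|u(\cdot,t)\|_{L^\rho(\Omega)}\big)^{1-\vartheta}\|\nabla u\|_{L^p(Q_T)}^{\vartheta}$, and applying the weighted arithmetic--geometric inequality $a^{1-\vartheta}b^{\vartheta}\leq a+b$.

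The step I expect to require the most care is the validity of the underlying Gagliardo--Nirenberg inequality across the full admissible range of parameters. One must check that $\vartheta=\frac{N}{N+\rho}$ lies in $[0,1]$, which is immediate here since $\rho\geq1$, and handle the borderline and supercritical regimes $p\geq N$, where the Sobolev exponent $\frac{Np}{N-p}$ is replaced by an arbitrarily large finite exponent while the interpolation constant still depends only on $N$ and $p$. Finiteness of the right-hand side, and hence the membership $u\in L^q(Q_T)$, is part of the conclusion and follows a posteriori from the estimate, first established on truncations of $u$ and then passed to the limit. Throughout, one tracks that the constant furnished by Gagliardo--Nirenberg depends only on $N$ and $p$, as asserted.
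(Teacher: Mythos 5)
Your argument is correct: the scaling identity with $\vartheta=N/(N+\rho)$ checks out, $(1-\vartheta)q=p\rho/N$, and the slice-wise Gagliardo--Nirenberg inequality followed by integration in $t$ and extraction of the supremum gives (\ref{G-N 2}), with (\ref{G-N}) then following from $a^{1-\vartheta}b^{\vartheta}\leq a+b$. The paper offers no proof of this lemma (it simply cites Proposition 3.1 of DiBenedetto), and your derivation is precisely the standard argument behind that cited result, so there is nothing to compare beyond noting the agreement.
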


Moreover in the proof of uniqueness\ result for Problem (\ref{P0}) we need the
following version of Gronwall lemma.

\begin{lemma}
Let $T>0$ and let $a,d\ $be non-decreasing functions belonging to $L_{loc}%
^{1}(%
%TCIMACRO{\U{211d} }%
%BeginExpansion
\mathbb{R}
%EndExpansion
_{+}),$ $b\in L_{loc}^{\infty}(%
%TCIMACRO{\U{211d} }%
%BeginExpansion
\mathbb{R}
%EndExpansion
_{+})$ and $z\in L_{loc}^{1}(%
%TCIMACRO{\U{211d} }%
%BeginExpansion
\mathbb{R}
%EndExpansion
_{+})$ such that%
\[
z(t)\leq a(t)+d(t)\int_{0}^{t}b(s)z(s)ds\text{ \ for a.e. }t\in\left[
0,T\right]  ,
\]
then%
\begin{equation}
z(t)\leq a(t)\left[  1+d(t)\int_{0}^{t}b(s)ds\exp\left(  d(t)\int_{0}%
^{t}b(s)ds\right)  \right]  \text{ \ \ for }a.e.\text{ }t\in\left[
0,T\right]  .\text{\ } \label{gronw 2}%
\end{equation}

\end{lemma}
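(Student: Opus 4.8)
The plan is to reduce the stated inequality to the classical linear Gronwall lemma by exploiting the monotonicity of $a$ and $d$, and then to pass from the resulting sharp exponential bound to the (weaker) form displayed in \eqref{gronw 2}. Throughout I shall use the natural sign conditions $b\geq0$ and $z\geq0$ that accompany such estimates in the applications, where $z$ plays the role of an energy.

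First I would fix $t\in(0,T]$ and introduce the running integral $y(s):=\int_{0}^{s}b(\tau)z(\tau)\,d\tau$ for $s\in[0,t]$, which is absolutely continuous with $y(0)=0$ and $y'(s)=b(s)z(s)$ for a.e.\ $s$. Since $a$ and $d$ are non-decreasing and $s\leq t$, the hypothesis gives $z(s)\leq a(s)+d(s)y(s)\leq a(t)+d(t)y(s)$, where the second inequality uses $a(s)\leq a(t)$ together with $d(s)y(s)\leq d(t)y(s)$; the latter is exactly where $y(s)\geq0$ (hence $b,z\geq0$) is needed. Multiplying by $b(s)\geq0$ yields the linear differential inequality $y'(s)\leq a(t)b(s)+d(t)b(s)y(s)$ with \emph{constant} coefficients $a(t),d(t)$ in the variable $s$.

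Next I would integrate this inequality by means of the integrating factor $\mu(s):=\exp\left(-d(t)\int_{0}^{s}b\right)$. Writing $B(t):=\int_{0}^{t}b(s)\,ds$ and integrating $\frac{d}{ds}\big(y\mu\big)\leq a(t)b(s)\mu(s)$ over $[0,t]$, the elementary identity $\int_{0}^{t}b(s)\mu(s)\,ds=\frac{1}{d(t)}\big(1-\mu(t)\big)$ (valid when $d(t)>0$; the case $d(t)=0$ is trivial) gives $y(t)\leq\frac{a(t)}{d(t)}\big(e^{d(t)B(t)}-1\big)$. Substituting back into the hypothesis at $s=t$ produces the sharp bound $z(t)\leq a(t)\,e^{d(t)B(t)}$.

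Finally I would deduce \eqref{gronw 2} from this sharp bound by the elementary inequality $e^{x}\leq 1+x\,e^{x}$ for $x\geq0$, which follows since $1+(x-1)e^{x}$ vanishes at $x=0$ and has non-negative derivative $xe^{x}$; applied with $x=d(t)B(t)\geq0$ it yields precisely the displayed form. Since the original inequality is assumed for a.e.\ $t$, the conclusion holds for a.e.\ $t\in[0,T]$. I expect the only delicate point to be the monotonicity reduction in the first step, namely justifying the replacement of $a(s),d(s)$ by $a(t),d(t)$, which rests on the sign of the running integral $y$; once the estimate is linearised with constant coefficients, the remaining steps are the standard integrating-factor computation.
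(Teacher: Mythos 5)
The paper states this lemma without proof, recalling it as a known variant of Gronwall's inequality, so there is no argument of the author's to compare yours against; judged on its own, your proof is correct. Your route --- freezing the coefficients at $a(t),d(t)$ by monotonicity, running the integrating-factor computation to obtain the sharp bound $z(t)\leq a(t)\exp\bigl(d(t)\int_{0}^{t}b\bigr)$, and then weakening it via $e^{x}\leq 1+xe^{x}$ for $x\geq 0$ --- is a clean way to arrive at the displayed form. For comparison, the form of \eqref{gronw 2} also drops out directly if one keeps $a(s)$ under the integral, first deriving $z(t)\leq a(t)+d(t)\int_{0}^{t}a(s)b(s)\exp\bigl(d(t)\int_{s}^{t}b\bigr)\,ds$ and only then invoking the monotonicity of $a$; that variant avoids the final elementary inequality but is otherwise the same computation. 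One point you should make fully explicit: you correctly note that $b\geq 0$ and $z\geq 0$ (hence $y\geq 0$) are implicitly assumed --- indeed the lemma is false without a sign condition on $b$ --- but your final step also requires $a(t)\geq 0$, in order to multiply $e^{x}\leq 1+xe^{x}$ by $a(t)$ without reversing the inequality, and $d(t)\geq 0$, so that $x=d(t)\int_{0}^{t}b\geq 0$. All of these sign conditions hold in the paper's application of the lemma (in \eqref{eq3333} the quantities playing the roles of $a$, $b$, $d$, $z$ are all nonnegative), so nothing breaks, but they belong in the list of tacit hypotheses alongside $b,z\geq 0$.
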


\subsection{Existence of a weak solution}

To our knowledge in literature there are not existence results for weak
solutions to Problem (\ref{P0}).

\begin{proposition}
\label{prop esistenza}Under assumptions (\ref{ellittic})-(\ref{monotonia}) and
(\ref{crescita G})-(\ref{fff}) there exists at least a weak solution $u\in
L^{\infty}(0,T;L^{2}(\Omega))\cap L^{p}(0,T;W_{0}^{1,p}(\Omega))$ to Problem
(\ref{P0}).
\end{proposition}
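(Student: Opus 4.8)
The plan is to construct $u$ as the limit of solutions of regularized problems and to pass to the limit by compactness combined with the monotonicity of the principal part; throughout set $q=p\frac{N+2}{N}$, so that $f_{0}\in L^{q'}(Q_{T})$ and the parabolic embedding of Lemma \ref{G-N Lemma} applies with $\rho=2$.

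First I would regularize the zero order term by truncation: put $G_{n}(x,t,s)=G(x,t,T_{n}(s))$ with $T_{n}(s)=\max\{-n,\min\{n,s\}\}$, so that each $G_{n}$ is a Carath\'eodory function with $|G_{n}(x,t,s)|\le c(x,t)\,n^{\lambda}$, hence a bounded lower order perturbation, while still $|G_{n}(x,t,s)|\le c(x,t)|s|^{\lambda}$. Since the principal part $v\mapsto-\operatorname{div}a(x,t,v,\nabla v)$ is of Leray--Lions type by (\ref{ellittic}), (\ref{crescita}) and (\ref{monotonia}), the operator associated with $-\operatorname{div}a+G_{n}$ is pseudomonotone and coercive on $L^{p}(0,T;W_{0}^{1,p}(\Omega))$; the classical existence theory for parabolic problems governed by such operators then yields, for each $n$, a weak solution $u_{n}\in L^{p}(0,T;W_{0}^{1,p}(\Omega))\cap C(0,T;L^{2}(\Omega))$ of the problem obtained from (\ref{P0}) by replacing $G$ with $G_{n}$.

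The heart of the argument is a priori estimates uniform in $n$. Taking $u_{n}$ as test function on $Q_{t}$ and using (\ref{ellittic}) gives
\begin{equation*}
\frac12\int_{\Omega}|u_{n}(t)|^{2}+\alpha_{1}\iint_{Q_{t}}|\nabla u_{n}|^{p}\le\frac12\int_{\Omega}|u_{0}|^{2}+\iint_{Q_{t}}\left(|f_{0}||u_{n}|+|F||\nabla u_{n}|\right)+\iint_{Q_{t}}c\,|u_{n}|^{\lambda+1}.
\end{equation*}
The forcing terms are absorbed by Young's inequality using $f_{0}\in L^{q'}(Q_{T})$, $u_{n}\in L^{q}(Q_{T})$, $F\in L^{p'}(Q_{T})$ and $\nabla u_{n}\in L^{p}(Q_{T})$. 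The delicate term is $\iint_{Q_{t}}c\,|u_{n}|^{\lambda+1}$: the exponents $\lambda=p\frac{N+2}{N+1}$ and $\rho=\frac{N+p}{N}$ are calibrated precisely so that, after H\"older's inequality and the parabolic embedding (\ref{G-N 2}), this term is bounded by $\varepsilon\iint_{Q_{t}}|\nabla u_{n}|^{p}$ plus a quantity controlled by $\sup_{0<\tau<t}\|u_{n}(\tau)\|_{L^{2}(\Omega)}$. Absorbing $\varepsilon\iint|\nabla u_{n}|^{p}$ into the left-hand side and closing the resulting integral inequality by Gronwall's lemma produces a bound for $u_{n}$ in $L^{\infty}(0,T;L^{2}(\Omega))\cap L^{p}(0,T;W_{0}^{1,p}(\Omega))$ independent of $n$. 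I expect this step, together with its companion estimate on $\partial_{t}u_{n}$ in $L^{p'}(0,T;W^{-1,p'}(\Omega))$ obtained from the equation and the growth bound (\ref{crescita}), to be the main technical obstacle, exactly because $G$ is at the critical growth threshold and carries no sign condition.

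With these bounds in hand, Aubin--Lions compactness gives, up to a subsequence, $u_{n}\to u$ strongly in $L^{p}(Q_{T})$ and a.e. in $Q_{T}$, while $\nabla u_{n}\rightharpoonup\nabla u$ in $L^{p}(Q_{T})^{N}$, $a(\cdot,u_{n},\nabla u_{n})\rightharpoonup\chi$ in $L^{p'}(Q_{T})^{N}$, and $G_{n}(\cdot,u_{n})\to G(\cdot,u)$; the last convergence follows from the a.e.\ convergence of $u_{n}$ and Vitali's theorem, the required equi-integrability being furnished by (\ref{crescita G}) together with the uniform $L^{\lambda+1}$-type control coming from the a priori estimate. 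It remains to identify $\chi=a(\cdot,u,\nabla u)$. Since $a$ depends on $u$ and is nonlinear in $\nabla u$, I would use Minty's monotonicity trick: passing to the limit in the energy identity shows $\limsup_{n}\iint_{Q_{T}}a(\cdot,u_{n},\nabla u_{n})\cdot\nabla u_{n}\le\iint_{Q_{T}}\chi\cdot\nabla u$, and combining this with the a.e.\ convergence $u_{n}\to u$ (which lets one replace $a(\cdot,u_{n},\xi)$ by $a(\cdot,u,\xi)$ for frozen $\xi$) and the strict monotonicity (\ref{monotonia}) yields $\nabla u_{n}\to\nabla u$ a.e.\ and $\chi=a(\cdot,u,\nabla u)$. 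Passing to the limit in the weak formulation of the regularized problems then shows that $u$ is a weak solution of (\ref{P0}), and the continuity $u\in C(0,T;L^{2}(\Omega))$ follows from $u\in L^{p}(0,T;W_{0}^{1,p}(\Omega))$ and the bound on $u_{t}$ by the usual continuity (trace) theorem.
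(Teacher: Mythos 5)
Your overall architecture (truncate the zero-order term, solve the approximate problems by pseudomonotonicity and coercivity, derive uniform a priori bounds, pass to the limit by compactness plus Minty's monotonicity trick, identify $G_{n}(u_{n})\to G(u)$ by Vitali) coincides with the paper's, which delegates the limit passage to the proof of Theorem 2.2 of \cite{Porzio} and only writes out the $L^{q^{\prime}}$ bound on $G_{n}(x,t,u_{n})$. The gap is exactly where you predicted it, in the a priori estimate, and the fix you propose does not close it. Because $\lambda$ and $\rho$ sit at the critical threshold, H\"older's inequality followed by the parabolic embedding (\ref{G-N 2}) turns $\iint_{Q_{t}}c|u_{n}|^{\lambda+1}$ into a product of powers of $\sup_{0<\tau<t}\int_{\Omega}u_{n}^{2}$ and $\iint_{Q_{t}}|\nabla u_{n}|^{p}$ whose Young-inequality upper bound is
\[
\kappa_{4}\,\|c\|_{L^{\rho}(Q_{t})}\Bigl[\,\sup_{0<\tau<t}\int_{\Omega}u_{n}^{2}+\iint_{Q_{t}}|\nabla u_{n}|^{p}\Bigr],
\]
i.e.\ the \emph{full} energy multiplied by a fixed constant times $\|c\|_{L^{\rho}(Q_{t})}$ (this is the paper's (\ref{stima G})). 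There is no free small parameter here: both factors in the product are the quantities you are trying to estimate, so you cannot trade a small $\varepsilon$ in front of the gradient term against a harmless remainder; and the bound involves the supremum over the whole of $(0,t)$ with a norm coefficient rather than a term of the form $\int_{0}^{t}b(s)z(s)\,ds$, so Gronwall's lemma (which the paper uses only in the uniqueness proof of Theorem \ref{th chipot}) does not apply. Absorption into the left-hand side is possible only when $\kappa_{4}\|c\|_{L^{\rho}(Q_{t})}<1$.

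The missing device is the one the paper states explicitly: by absolute continuity of the integral, partition $[0,T]$ into finitely many subintervals $(t_{i},t_{i+1})$ on which $\|c\|_{L^{\rho}(\Omega\times(t_{i},t_{i+1}))}$ is small enough, prove the estimate (\ref{finale stima}) on each small cylinder using $u_{n}(\cdot,t_{i})$ (already controlled at the previous step) as initial datum, and sum over $i$ to obtain (\ref{stima a priori}) on all of $Q_{T}$. Two further, minor, discrepancies: the paper truncates the output, $G_{n}=T_{n}(G)$, rather than the argument as you do (both work), and it also replaces $f_{0}$ by a sequence $f_{n}\in L^{p^{\prime}}(Q_{T})$ converging strongly in $L^{q^{\prime}}(Q_{T})$ so that the approximate problems fit the classical variational framework of \cite{lions}.
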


The proof is standard but for convenience of reader we will give here some
steps. We observe that the coercivity of the operator is guaranteed only if
the norm $\left\Vert c\right\Vert _{L^{\rho}(Q_{T})}$ is small enough. Then as
usual we consider the approximate problems
\begin{equation}
\left\{
\begin{array}
[c]{lll}%
\left(  u_{n}\right)  _{t}+L_{n}u_{n}=f_{n}-\operatorname{div}F & \text{in} &
Q_{T}\\
u_{n}(x,t)=0 & \text{in} & \partial\Omega\times\left(  0,T\right)  \\
u_{n}(x,0)=u_{0}(x) & \text{on} & \Omega,
\end{array}
\right.  \label{appro}%
\end{equation}
where $L_{n}u=-\operatorname{div}a(x,t,u,\nabla u)+G_{n}(x,t,u),$
$G_{n}(x,t,s)=T_{n}(G(x,t,s))$, $T_{n}$ is the truncation at level $\pm n,$
defined by%
\begin{equation}
T_{n}(s)=\max\left\{  -n,\min\left\{  n,s\right\}  \right\}
\label{troncaa def}%
\end{equation}
and $\left\{  f_{n}\right\}  _{n\in%
%TCIMACRO{\U{2115} }%
%BeginExpansion
\mathbb{N}
%EndExpansion
}\subset L^{p^{\prime}}(Q_{T})$ such that with $f_{n}\rightarrow f_{0}$
strongly in $L^{q^{\prime}}(Q_{T})$ with $q=\frac{p(N+2)}{N}.$ The operator
$L_{n}$ is pseudomonotone and coercive, then (see \textit{e.g.} \cite{lions})
there exists a weak solution $u_{n}\in L^{\infty}(0,T;L^{2}(\Omega))\cap
L^{p}(0,T;W_{0}^{1,p}(\Omega)).$ The following a priori estimate of $u_{n}$\ holds.

\begin{lemma}
\label{lemma stima}Assume that (\ref{ellittic})-(\ref{monotonia}) and
(\ref{crescita G})-(\ref{fff}) hold. If $u_{n}$ is a weak solution to Problem
(\ref{appro}), then there exists a constant $C_{0}$ (depending on the data
that appear in the structure conditions but not on $n$) such that
\begin{equation}
\underset{0<t<T}{\sup}\left\Vert u_{n}\right\Vert _{L^{2}(\Omega)}+\left\Vert
\left\vert \nabla u\right\vert \right\Vert _{L^{p}(Q_{T})}\leq C_{0}.
\label{stima a priori}%
\end{equation}

\end{lemma}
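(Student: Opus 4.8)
The plan is to prove (\ref{stima a priori}) by the classical energy method: test the equation in (\ref{appro}) with the solution itself. Since $u_n$ belongs only to $L^\infty(0,T;L^2(\Omega))\cap L^p(0,T;W_0^{1,p}(\Omega))$ and need not have a time derivative regular enough to be admissible in (\ref{sol deb}), I would first justify the choice $v=u_n$ via Steklov averaging (or the usual Landes-type regularization), which produces $\iint_{Q_t}(u_n)_\tau u_n\,dx\,d\tau=\frac12\int_\Omega u_n^2(\cdot,t)\,dx-\frac12\int_\Omega u_0^2\,dx$. Writing $M:=\sup_{0<t<T}\int_\Omega u_n^2\,dx$ and $D:=\iint_{Q_T}|\nabla u_n|^p$, and using the ellipticity (\ref{ellittic}) on the diffusion term, the energy identity gives, for every $t\in(0,T]$,
\[
\tfrac12\int_\Omega u_n^2(\cdot,t)\,dx+\alpha_1\iint_{Q_t}|\nabla u_n|^p\le \tfrac12\|u_0\|_{L^2(\Omega)}^2+\Big|\iint_{Q_t}G_n(u_n)u_n\Big|+\Big|\iint_{Q_t}f_n u_n\Big|+\Big|\iint_{Q_t}F\cdot\nabla u_n\Big|.
\]
As the right-hand side is nondecreasing in $t$, taking the supremum over $t$ bounds both $M$ and $\alpha_1 D$ by (twice) the right-hand side evaluated at $T$, and the whole task reduces to absorbing the three terms on the right into $\tfrac12 M+\alpha_1 D$.

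The two data terms are subcritical and routine. The last one is handled by Young's inequality with exponents $p,p'$, giving $|\iint F\cdot\nabla u_n|\le \tfrac{\alpha_1}{4}D+C\|F\|^{p'}$, the gradient part being moved to the left. For the forcing term I would use $f_0\in L^{q'}(Q_T)$ with $q=\tfrac{p(N+2)}{N}$ (and $\|f_n\|_{L^{q'}}$ bounded uniformly in $n$ because $f_n\to f_0$ in $L^{q'}$), together with the embedding (\ref{G-N 2}) applied with $\rho=2$, which yields $\|u_n\|_{L^q(Q_T)}\le C\,M^{1/(N+2)}D^{N/(p(N+2))}$. Since the two exponents add up to $\tfrac{N+p}{p(N+2)}<1$ (as $p>1$), Young's inequality gives $|\iint f_n u_n|\le \tfrac14 M+\tfrac{\alpha_1}{4}D+C$, again absorbed on the left. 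These steps introduce constants depending only on the data, never on $n$.

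The decisive term is the zero-order one, and this is the step I expect to be the main obstacle. Because $G_n=T_n(G)$ satisfies $|G_n(x,t,s)|\le|G(x,t,s)|\le c(x,t)|s|^\lambda$, one has $|\iint_{Q_t}G_n(u_n)u_n|\le \iint_{Q_T}c\,|u_n|^{\lambda+1}$. The exponents $\lambda=p\tfrac{N+2}{N+1}$ and $\rho=\tfrac{N+p}{N}$ are precisely those for which, after Hölder's inequality against $c\in L^\rho(Q_T)$ and a careful interpolation built on the parabolic embedding of Lemma \ref{G-N Lemma}, this term is \emph{critical}: it is estimated by a quantity of degree one in $(M,D)$, of the form $C\|c\|_{L^\rho(Q_T)}\,M^{a}D^{1-a}\le C\|c\|_{L^\rho(Q_T)}(M+D)$, with no room to spare. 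Unlike the data terms it therefore cannot be absorbed for free; the argument closes only because $\|c\|_{L^\rho(Q_T)}$ is assumed small (as already noted before the Proposition), so that $C\|c\|_{L^\rho(Q_T)}\le\min\{\tfrac14,\tfrac{\alpha_1}{2}\}$ and the term can be moved to the left-hand side. Collecting the three absorptions leaves $\tfrac12 M+\tfrac{\alpha_1}{2}D\le \tfrac12\|u_0\|_{L^2(\Omega)}^2+C$, whence $M+D\le C_0^2$ with $C_0$ depending only on the structural data and not on $n$, which is exactly (\ref{stima a priori}). The only subtlety to keep in mind throughout is that, although each manipulation is carried out on the genuine (hence sufficiently integrable) truncated solutions $u_n$, all the constants produced depend solely on the fixed structural bounds, in particular on $|G_n|\le c|s|^\lambda$, and never on the truncation level $n$.
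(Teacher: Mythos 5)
Your energy-method skeleton is the same as the paper's: test with $u_n$, use (\ref{ellittic}), absorb the $F$ and $f_n$ terms by Young's inequality and the parabolic embedding (\ref{G-N 2}) with $\rho=2$, and recognize that the zero-order term $\iint c|u_n|^{\lambda+1}$ is critical, producing a degree-one quantity $C\|c\|_{L^{\rho}}(\sup_t\int_\Omega u_n^2+\iint|\nabla u_n|^p)$ that cannot be absorbed unless the coefficient in front is small. Up to that point the two arguments coincide.

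The gap is in how you close the critical term. You write that the argument works ``because $\|c\|_{L^{\rho}(Q_T)}$ is assumed small (as already noted before the Proposition)''. No such assumption is made: the hypotheses of the lemma (and of Proposition \ref{prop esistenza}) only require $c\in L^{\rho}(Q_T)$ with $\rho=\frac{N+p}{N}$, and the sentence preceding the approximate problems is pointing out the \emph{obstacle}, not introducing a hypothesis. As written, your proof only establishes the a priori bound under an extra smallness condition on $c$, which is strictly weaker than the stated lemma. The missing idea is the time-slicing/iteration step: by absolute continuity of the integral one partitions $[0,T]$ into finitely many subintervals $(t_i,t_{i+1})$, $i=0,\dots,M-1$, on each of which $\|c\|_{L^{\rho}(\Omega\times(t_i,t_{i+1}))}$ is as small as needed (the number $M$ depends only on $c$, not on $n$). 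One first runs your argument on $\Omega\times(0,t_1]$, obtaining the bound there in terms of $\|u_0\|_{L^2}$; then on $\Omega\times(t_1,t_2]$ with initial datum $u_n(\cdot,t_1)$, which the previous step has already controlled uniformly in $n$; and so on. Summing the finitely many resulting estimates gives (\ref{stima a priori}) on all of $Q_T$ with a constant independent of $n$. Without this step your absorption of the $G_n$-term simply fails for general $c\in L^{\rho}(Q_T)$.
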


\begin{proof}
Using $u_{n}$ as test function in Problem (\ref{appro}), under
assumptions\ (\ref{ellittic}) and (\ref{crescita G}) we obtain for
$t\in\left(  0,T\right)  $
\begin{align}
&  \frac{1}{2}\int_{\Omega}u_{n}^{2}(t)dx+\alpha_{1}%
%TCIMACRO{\diint _{Q_{t}}}%
%BeginExpansion
{\displaystyle\iint_{Q_{t}}}
%EndExpansion
\left\vert \nabla u_{n}\right\vert ^{p}dxd\tau\leq\label{aaa}\\
&
%TCIMACRO{\diint _{Q_{t}}}%
%BeginExpansion
{\displaystyle\iint_{Q_{t}}}
%EndExpansion
c\left\vert u_{n}\right\vert ^{\lambda+1}dxd\tau+\frac{1}{2}\int_{\Omega}%
u_{0}^{2}dx+%
%TCIMACRO{\diint _{Q_{t}}}%
%BeginExpansion
{\displaystyle\iint_{Q_{t}}}
%EndExpansion
\left(  f_{n}u_{n}+F\nabla u_{n}\right)  dxd\tau.\nonumber
\end{align}
Using H\"{o}lder inequality, (\ref{G-N}) and Young inequality we have%
\begin{align}
\!\!%
%TCIMACRO{\diint _{Q_{t}}}%
%BeginExpansion
{\displaystyle\iint_{Q_{t}}}
%EndExpansion
\!\!\left(  \!f_{0}u_{n}\!+\!F\nabla u_{n}\!\right)  dxd\tau\!  &  \leq
\!\frac{\alpha_{1}}{p}\!\left\Vert \left\vert \nabla u\right\vert \right\Vert
_{L^{p}(Q_{t})}^{p}\!+\!\kappa_{1}\!\left\Vert \left\vert F\right\vert
\right\Vert _{L^{p\prime}(Q_{t})}\!+\!\kappa_{2}\left\Vert f_{n}\right\Vert
_{L^{q\prime}(Q_{t})}^{\frac{p(N+2)}{p(N+1)-N}}\!\label{stima f}\\
&  +\kappa_{3}\left[  \int_{\Omega}u_{n}^{2}(t)dx+\left\Vert \left\vert \nabla
u\right\vert \right\Vert _{L^{p}(Q_{t})}^{p}\right] \nonumber
\end{align}
for some positive constant\ $\kappa_{1},\kappa_{2}$ and $\kappa_{3}$ with
$\kappa_{3}<\min\left\{  \frac{1}{2},\frac{\alpha_{1}}{p^{\prime}}\right\}  .$
Using (\ref{G-N}) and Young inequality, we get
\begin{align}%
%TCIMACRO{\diint _{Q_{t}}}%
%BeginExpansion
{\displaystyle\iint_{Q_{t}}}
%EndExpansion
c(x,\tau)\left\vert u\right\vert ^{\lambda+1}dxd\tau &  \leq\left\Vert
c\right\Vert _{L^{\rho}(Q_{t})}\left(  \underset{0<\tau<t}{\sup}\int_{\Omega
}u_{n}^{2}(\tau)dx\right)  ^{\frac{p}{N\rho^{\prime}}}\left(
%TCIMACRO{\diint _{Q_{t}}}%
%BeginExpansion
{\displaystyle\iint_{Q_{t}}}
%EndExpansion
\left\vert \nabla u_{n}\right\vert ^{p}dxd\tau\right)  ^{\frac{1}{\rho
^{\prime}}}\label{stima G}\\
&  \leq\kappa_{4}\left\Vert c\right\Vert _{L^{\rho}(Q_{t})}\left[
\underset{0<\tau<t}{\sup}\int_{\Omega}u_{n}^{2}(\tau)dx+%
%TCIMACRO{\diint _{Q_{t}}}%
%BeginExpansion
{\displaystyle\iint_{Q_{t}}}
%EndExpansion
\left\vert \nabla u_{n}\right\vert ^{p}dxd\tau\right] \nonumber
\end{align}
for some positive constant\ $\kappa_{4}.$ Using (\ref{stima f}) and
(\ref{stima G}) in (\ref{aaa}) and taking the supremum on $\left(
0,t_{1}\right]  $ for some $t_{1}\leq T$ such that $\left\Vert c\right\Vert
_{L^{\rho}(Q_{t_{1}})}$ is small enough, we obtain
\begin{equation}
\underset{t\in\left(  0,t_{1}\right]  }{\sup}\int_{\Omega}u_{n}^{2}(t)dx+%
%TCIMACRO{\diint _{Q_{t_{1}}}}%
%BeginExpansion
{\displaystyle\iint_{Q_{t_{1}}}}
%EndExpansion
\left\vert \nabla u_{n}\right\vert ^{p}dxdt\leq\kappa_{5}\left[  \int_{\Omega
}u_{0}^{2}dx+\left\Vert \left\vert F\right\vert \right\Vert _{L^{p\prime
}(Q_{t_{1}})}+1\right]  , \label{finale stima}%
\end{equation}
for some positive constant $\kappa_{5},$ since $\left\{  f_{n}\right\}  _{n\in%
%TCIMACRO{\U{2115} }%
%BeginExpansion
\mathbb{N}
%EndExpansion
}$ is bounded in $L^{q\prime}(Q_{T}).$ In order to avoid the assumption on
smallness of the norm $\left\Vert c\right\Vert _{L^{\rho}(Q_{T})}$ we split
(see also \cite{Porzio} and \cite{mio olivier esist}) the interval $\left[
0,T\right]  $ in $M$ small subinterval $\left(  t_{i},t_{i+1}\right)  $ for
$i=0,...,M-1$ in such a way $\left\Vert c\right\Vert _{L^{\rho}(\Omega
\times\left(  t_{i},t_{i+1}\right)  )}$ is small enough. We are able to derive
an estimate like (\ref{finale stima}) for small cylinder $\Omega\times\left(
t_{i},t_{i+1}\right)  .$ Finally taking the sum of different iterations,
(\ref{stima a priori}) holds for the inter cylinder $Q_{T}$.
\end{proof}

\begin{proof}
[Proof of Proposition \ref{prop esistenza}.]By Lemma \ref{lemma stima} it
follows that $\left\{  u_{n}\right\}  _{n\in%
%TCIMACRO{\U{2115} }%
%BeginExpansion
\mathbb{N}
%EndExpansion
}$ is bounded\ sequence of $L^{\infty}(0,T;L^{2}(\Omega))\cap L^{p}%
(0,T;W_{0}^{1,p}(\Omega))$. Then it is possible to proceed as in the proof of
Theorem 2.2 of \cite{Porzio} to pass to the limit in (\ref{appro}) and to
conclude the existence of at least a weak solution to Problem (\ref{P0}).

We explicitly write only the computation about the boundness of $G_{n}%
(x,t,u_{n})$ in $L^{q^{\prime}}(Q_{T}).$ Ended by (\ref{crescita G}),
H\"{o}lder inequality and (\ref{G-N}) we have%
\begin{align*}%
%TCIMACRO{\diint _{Q_{T}}}%
%BeginExpansion
{\displaystyle\iint_{Q_{T}}}
%EndExpansion
\left\vert G_{n}(x,t,u_{n})\right\vert ^{q^{\prime}}dxdt  &  \leq%
%TCIMACRO{\diint _{Q_{T}}}%
%BeginExpansion
{\displaystyle\iint_{Q_{T}}}
%EndExpansion
c(x,t)^{q^{\prime}}\left\vert u_{n}\right\vert ^{\lambda q^{\prime}}%
dxdt\leq\left\Vert c\right\Vert _{L^{\rho}(Q_{t})}^{1-\frac{\lambda q^{\prime
}}{q}}\left(
%TCIMACRO{\diint _{Q_{T}}}%
%BeginExpansion
{\displaystyle\iint_{Q_{T}}}
%EndExpansion
u_{n}^{q}\right)  ^{\frac{\lambda q^{\prime}}{q}}\\
&  \leq\left\Vert c\right\Vert _{L^{\rho}(Q_{t})}^{1-\frac{\lambda
q}{q^{\prime}}}\left[  C_{p}\left(  \underset{0<t<T}{\sup}\left\Vert
u_{n}(.,t)\right\Vert _{L^{2}(\Omega)}+\left\Vert \nabla u_{n}\right\Vert
_{L^{p}(Q_{T})}\right)  \right]  ^{\lambda q^{\prime}}.
\end{align*}
By Lemma \ref{lemma stima} the norm of $G_{n}(x,t,u_{n})$ in $L^{q^{\prime}%
}(Q_{T})$ is bounded by a constant depending on the data that appear in the
structure conditions but not on $n.$
\end{proof}

\subsection{Uniqueness of weak solutions}

In this subsection we prove \textit{ab aburdo} the uniqueness of weak
solutions to Problem (\ref{P0}).

\begin{proof}
[Proof of uniqueness in the hypothesis of Theorem \ref{th chipot}]We argue by
contradiction. Let us assume that Problem (\ref{P0}) admits two different
solutions $u$ and $v$ and $D=\left\{  (x,t)\in Q_{T}:w>0\right\}  $ has
positive measure, where $w=u-v.$ Using $\varphi=\frac{T_{k}(w^{+})}{k}$ for
$k\in\left[  0,\underset{D}{\sup}w^{+}\right[  $as test function in the
difference of the equations (where $T_{k}(\cdot)$ is defined in
(\ref{troncaa def})), we obtain for $t\in(0,T)$
\[
\!\int_{\Omega}w\varphi\!+\!%
%TCIMACRO{\diint _{\!Q_{t}}}%
%BeginExpansion
{\displaystyle\iint_{\!Q_{t}}}
%EndExpansion
\!\!\!\left\{  -w\varphi_{t}+\left[  a\left(  x,t,u,\nabla u\right)  -a\left(
x,t,v,\nabla v\right)  \right]  \nabla\varphi+\left[
c(x,t,u)\!-\!c(x,t,v)\right]  \varphi\right\}  \!=\!0.\!
\]
Let us denote $\Psi_{k}(s)=\int_{0}^{s}T_{k}(\sigma)d\sigma.$ We have that
\begin{equation}
\int_{\Omega}w\varphi-%
%TCIMACRO{\diint _{Q_{t}}}%
%BeginExpansion
{\displaystyle\iint_{Q_{t}}}
%EndExpansion
w\varphi_{t}=\frac{1}{k}\int_{\Omega}\Psi_{k}(w^{+}(t))\text{ }
\label{int per parti}%
\end{equation}
\ for $k>0.$ By (\ref{monotonia forte}), (\ref{lip a}), (\ref{lip c})
(\ref{int per parti}) we get%
\begin{align}
&  \frac{1}{k^{2}}\int_{\Omega}\Psi_{k}(w^{+}(t))+\alpha%
%TCIMACRO{\diint _{Q_{t}\cap D_{k}}}%
%BeginExpansion
{\displaystyle\iint_{Q_{t}\cap D_{k}}}
%EndExpansion
\frac{\left\vert \nabla\varphi\right\vert ^{2}}{\left(  \left\vert \nabla
u\right\vert +\left\vert \nabla v\right\vert \right)  ^{2-p}}\label{equ 3}\\
&  \leq\beta%
%TCIMACRO{\diint _{Q_{t}\cap D_{k}}}%
%BeginExpansion
{\displaystyle\iint_{Q_{t}\cap D_{k}}}
%EndExpansion
\left(  \phi+\left\vert \nabla v\right\vert ^{p-1}\right)  \left\vert
\nabla\varphi\right\vert +\frac{\varrho}{k}%
%TCIMACRO{\diint _{Q_{t}}}%
%BeginExpansion
{\displaystyle\iint_{Q_{t}}}
%EndExpansion
\left\vert w\right\vert \varphi,\nonumber
\end{align}
where $D_{k}=\left\{  (x,t)\in D:w^{+}<k\right\}  .$ Using Young inequality
with some $\delta>0$ it follows%
\begin{align}
\!%
%TCIMACRO{\diint _{Q_{t}\cap D_{k}}}%
%BeginExpansion
{\displaystyle\iint_{Q_{t}\cap D_{k}}}
%EndExpansion
\!\!\left(  \phi+\left\vert \nabla v\right\vert ^{p-1}\right)  \left\vert
\nabla\varphi\right\vert  &  \leq\!\!\frac{\delta\left(  \phi+1\right)  }{2}\!%
%TCIMACRO{\diint _{Q_{t}\cap D_{k}}}%
%BeginExpansion
{\displaystyle\iint_{Q_{t}\cap D_{k}}}
%EndExpansion
\!\frac{\left\vert \nabla\varphi\right\vert ^{2}}{\left(  \left\vert \nabla
u\right\vert +\left\vert \nabla v\right\vert \right)  ^{2-p}}%
\!\!\label{intermedia}\\
\!+  &  \frac{\phi}{4\delta}\!%
%TCIMACRO{\diint _{Q_{t}\cap D_{k}}}%
%BeginExpansion
{\displaystyle\iint_{Q_{t}\cap D_{k}}}
%EndExpansion
\!\!\!\left(  \!\left\vert \nabla u\right\vert +\left\vert \nabla v\right\vert
\!\right)  ^{2-p}\!\!+\!\frac{1}{4\delta}\!%
%TCIMACRO{\diint _{Q_{t}\cap D_{k}}}%
%BeginExpansion
{\displaystyle\iint_{Q_{t}\cap D_{k}}}
%EndExpansion
\!\!\left(  \!\left\vert \nabla u\right\vert +\left\vert \nabla v\right\vert
\!\right)  ^{p}\!.\nonumber
\end{align}
Choosing $\delta$ small enough, using (\ref{intermedia}) and Young inequality
in (\ref{eq3}) and noticing that $\Upsilon(s)=2\Psi_{k}(s)-sT_{k}(s)\geq0$ for
$s\geq0$ ( check that $\Upsilon(s)=0$ for $0\leq s\leq k$ and $\Upsilon
^{\prime}(s)\geq0),$ we obtain
\begin{align}
&  \frac{1}{k^{2}}\int_{\Omega}\Psi_{k}(w^{+}(t))+c_{1}%
%TCIMACRO{\diint _{Q_{t}\cap D_{k}}}%
%BeginExpansion
{\displaystyle\iint_{Q_{t}\cap D_{k}}}
%EndExpansion
\frac{\left\vert \nabla\varphi\right\vert ^{2}}{\left(  \left\vert \nabla
u\right\vert +\left\vert \nabla v\right\vert \right)  ^{2-p}}\label{eq3333}\\
&  \leq c_{2}\left(  \left\vert D_{k}\cap Q_{t}\right\vert +%
%TCIMACRO{\diint _{Q_{t}\cap D_{k}}}%
%BeginExpansion
{\displaystyle\iint_{Q_{t}\cap D_{k}}}
%EndExpansion
\left(  \left\vert \nabla u\right\vert +\left\vert \nabla v\right\vert
\right)  ^{p}+\frac{1}{k^{2}}%
%TCIMACRO{\diint _{Q_{t}}}%
%BeginExpansion
{\displaystyle\iint_{Q_{t}}}
%EndExpansion
\Psi_{k}(w^{+})\right) \nonumber
\end{align}
for some positive constants $c_{1},c_{2}$ independent on $k.$

\noindent Using Gronwall inequality (\ref{gronw 2})\ and taking the supremum
on $t\in(0,T)$, we get
\begin{equation}
\frac{1}{k^{2}}\underset{t\in(0,T)}{\sup}\int_{\Omega}\Psi_{k}(w^{+}(t))\leq
c_{2}\left(  1+Te^{T}\right)  \left(  \left\vert D_{k}\right\vert +%
%TCIMACRO{\diint _{D_{k}}}%
%BeginExpansion
{\displaystyle\iint_{D_{k}}}
%EndExpansion
\left(  \left\vert \nabla u\right\vert +\left\vert \nabla v\right\vert
\right)  ^{p}\right)  . \label{1}%
\end{equation}
It is easy to check that%
\begin{equation}
\zeta_{1}(k):=\left[  \left\vert D_{k}\right\vert +%
%TCIMACRO{\diint _{D_{k}}}%
%BeginExpansion
{\displaystyle\iint_{D_{k}}}
%EndExpansion
\left(  \left\vert \nabla u\right\vert +\left\vert \nabla v\right\vert
\right)  ^{p}\right]  \rightarrow0 \label{lim 00}%
\end{equation}
when $k$ goes to zero, then%
\begin{equation}
\underset{k\rightarrow0}{\lim}\frac{1}{k^{2}}\underset{t\in(0,T)}{\sup}%
\int_{\Omega}\Psi_{k}(w^{+}(t))=0. \label{tempo zero}%
\end{equation}
Recalling that $\frac{1}{2}\left\vert T_{k}(s)\right\vert ^{2}\leq\Psi
_{k}(s),$ we have
\begin{equation}
\underset{k\rightarrow0}{\lim}\underset{t\in(0,T)}{\sup}\int_{\Omega
}\left\vert \varphi\right\vert ^{2}=0. \label{tempo zero 2}%
\end{equation}
Coming back to (\ref{eq3333}), taking the supremum and using (\ref{tempo zero}%
) and (\ref{lim 00}), we obtain%
\begin{equation}
\underset{k\rightarrow0}{\lim}%
%TCIMACRO{\diint _{D_{k}}}%
%BeginExpansion
{\displaystyle\iint_{D_{k}}}
%EndExpansion
\frac{\left\vert \nabla\varphi\right\vert ^{2}}{\left(  \left\vert \nabla
u\right\vert +\left\vert \nabla v\right\vert \right)  ^{2-p}}=0.
\label{lim ellittico}%
\end{equation}
Moreover inequality (\ref{G-N 2}) and H\"{o}lder inequality imply%
\begin{align}
C_{1}^{-\frac{N+2}{N}}\left\vert D\backslash D_{k}\right\vert  &  \leq
C_{1}^{-\frac{N+2}{N}}\left\Vert \varphi\right\Vert _{L^{\frac{N+2}{N}}\left(
D\right)  }^{\frac{N+2}{N}}\leq\left(  \underset{t\in\left(  0,T\right)
}{\sup}\int_{\Omega}\left\vert \varphi\right\vert ^{2}\right)  ^{\frac{1}{N}}%
%TCIMACRO{\diint _{D_{k}}}%
%BeginExpansion
{\displaystyle\iint_{D_{k}}}
%EndExpansion
\left\vert \nabla\varphi\right\vert =\nonumber\\
\!\!  &  \leq\!\!\left(  \!\!\underset{t\in\left(  0,T\right)  }{\sup}%
\!\int_{\Omega}\!\left\vert \varphi\right\vert ^{2}\!\!\right)  ^{\!\frac
{1}{N}}\!\!\left(  \!%
%TCIMACRO{\diint _{\!D_{k}}}%
%BeginExpansion
{\displaystyle\iint_{\!D_{k}}}
%EndExpansion
\!\frac{\left\vert \nabla\varphi\right\vert ^{2}}{\left(  \left\vert \nabla
u\right\vert +\left\vert \nabla v\right\vert \right)  ^{2-p}}\!\right)
^{\!\frac{1}{2}}\!\!\left(  \!\!%
%TCIMACRO{\diint _{\!D_{k}}}%
%BeginExpansion
{\displaystyle\iint_{\!D_{k}}}
%EndExpansion
\!\!\!\left(  \!\left\vert \nabla u\right\vert +\left\vert \nabla v\right\vert
\!\right)  ^{2-p}\!\right)  ^{\!\frac{1}{2}}\!.\!\nonumber
\end{align}
By (\ref{lim 00}), (\ref{tempo zero 2}) and (\ref{lim ellittico}) it follows%
\begin{equation}
\left\vert D\right\vert =\underset{k\rightarrow0}{\lim}\left\vert D\backslash
D_{k}\right\vert =0. \label{lim D}%
\end{equation}
To complete the proof it suffices to replace $u$ and $v$.
\end{proof}

\begin{proof}
[Proof of uniqueness in the hypothesis of Theorem \ref{th p>2}]Arguing as in
the previous proof and taking into account the following extra term%
\[%
%TCIMACRO{\diint _{Q_{t}\cap D_{k}}}%
%BeginExpansion
{\displaystyle\iint_{Q_{t}\cap D_{k}}}
%EndExpansion
\left(  \left\vert u\right\vert +\left\vert v\right\vert \right)  ^{\theta
}\left\vert \nabla\varphi\right\vert ,
\]
we obtain%
\begin{align}
&  \frac{1}{k^{2}}\int_{\Omega}\Psi_{k}(w^{+}(t))+\alpha%
%TCIMACRO{\diint _{Q_{t}\cap D_{k}}}%
%BeginExpansion
{\displaystyle\iint_{Q_{t}\cap D_{k}}}
%EndExpansion
\left(  \varepsilon+\left\vert \nabla u\right\vert +\left\vert \nabla
v\right\vert \right)  ^{p-2}\left\vert \nabla\varphi\right\vert ^{2}%
\label{eq1}\\
&  \leq\beta%
%TCIMACRO{\diint _{Q_{t}\cap D_{k}}}%
%BeginExpansion
{\displaystyle\iint_{Q_{t}\cap D_{k}}}
%EndExpansion
\left[  \phi+\left\vert \nabla v\right\vert ^{p-1}+\left(  \left\vert
u\right\vert +\left\vert v\right\vert \right)  ^{\theta}\right]  \left\vert
\nabla\varphi\right\vert +\frac{2\varrho}{k^{2}}%
%TCIMACRO{\diint _{Q_{t}}}%
%BeginExpansion
{\displaystyle\iint_{Q_{t}}}
%EndExpansion
\Psi_{k}(w^{+}).\nonumber
\end{align}
Using Young inequality with some $\delta>0,$ we have the analogue of
(\ref{intermedia}), i.e.%
\begin{align}
&  \!%
%TCIMACRO{\diint _{Q_{t}\cap D_{k}}}%
%BeginExpansion
{\displaystyle\iint_{Q_{t}\cap D_{k}}}
%EndExpansion
\left[  \phi+\left\vert \nabla v\right\vert ^{p-1}+\left(  \left\vert
u\right\vert +\left\vert v\right\vert \right)  ^{\theta}\right]  \left\vert
\nabla\varphi\right\vert \label{bb}\\
&  \leq\frac{\delta}{2}\left[  \frac{(\phi+1)}{\varepsilon^{p-2}}+1\right]  \!%
%TCIMACRO{\diint _{Q_{t}\cap D_{k}}}%
%BeginExpansion
{\displaystyle\iint_{Q_{t}\cap D_{k}}}
%EndExpansion
\!\left(  \varepsilon\!+\!\left\vert \nabla u\right\vert \!+\!\left\vert
\nabla v\right\vert \right)  ^{p-2}\!\left\vert \nabla\varphi\right\vert
^{2}\!\!\nonumber\\
&  \!+\!\frac{\phi}{4\delta}\left\vert Q_{t}\cap D_{k}\right\vert
\!+\!\frac{1}{4\delta}%
%TCIMACRO{\diint _{Q_{t}\cap D_{k}}}%
%BeginExpansion
{\displaystyle\iint_{Q_{t}\cap D_{k}}}
%EndExpansion
\left\vert \nabla v\right\vert ^{p}\!+\!\frac{1}{4\delta}\!%
%TCIMACRO{\diint _{Q_{t}\cap D_{k}}}%
%BeginExpansion
{\displaystyle\iint_{Q_{t}\cap D_{k}}}
%EndExpansion
\!\left(  \left\vert u\right\vert \!+\!\left\vert v\right\vert \right)
^{2\theta}.\nonumber
\end{align}
Choosing $\delta$ small enough in (\ref{bb}), inequality (\ref{eq1}) gets
\begin{align}
&  \frac{1}{k^{2}}\int_{\Omega}\Psi_{k}(w^{+}(t))+c_{1}%
%TCIMACRO{\diint _{Q_{t}\cap D_{k}}}%
%BeginExpansion
{\displaystyle\iint_{Q_{t}\cap D_{k}}}
%EndExpansion
\left(  \varepsilon+\left\vert \nabla u\right\vert +\left\vert \nabla
v\right\vert \right)  ^{p-2}\left\vert \nabla\varphi\right\vert ^{2}%
\label{mm}\\
&  \leq c_{2}\left[  \left\vert D_{k}\right\vert +%
%TCIMACRO{\diint _{Q_{t}\cap D_{k}}}%
%BeginExpansion
{\displaystyle\iint_{Q_{t}\cap D_{k}}}
%EndExpansion
\left\vert \nabla v\right\vert ^{p}+%
%TCIMACRO{\diint _{Q_{t}\cap D_{k}}}%
%BeginExpansion
{\displaystyle\iint_{Q_{t}\cap D_{k}}}
%EndExpansion
\!\left(  \left\vert u\right\vert \!+\!\left\vert v\right\vert \right)
^{2\theta}+\frac{1}{k^{2}}%
%TCIMACRO{\diint _{Q_{t}}}%
%BeginExpansion
{\displaystyle\iint_{Q_{t}}}
%EndExpansion
\Psi_{k}(w^{+})\right]  .\nonumber
\end{align}
for some positive constants $c_{1},c_{2}$ independent on $k$.$\ $We remark
that since $2\theta\leq\frac{p(N+2)}{N}$%
\begin{equation}
\zeta_{2}(k):=\left\vert D_{k}\right\vert +%
%TCIMACRO{\diint _{D_{k}}}%
%BeginExpansion
{\displaystyle\iint_{D_{k}}}
%EndExpansion
\left\vert \nabla v\right\vert ^{p}+%
%TCIMACRO{\diint _{D_{k}}}%
%BeginExpansion
{\displaystyle\iint_{D_{k}}}
%EndExpansion
\!\left(  \left\vert u\right\vert \!+\!\left\vert v\right\vert \right)
^{2\theta}\rightarrow0 \label{lim 0}%
\end{equation}
when $k$ goes to zero. Arguing as\ in the next proof we obtain
(\ref{tempo zero 2}) and%
\begin{equation}
\underset{k\rightarrow0}{\lim}%
%TCIMACRO{\diint _{D_{k}}}%
%BeginExpansion
{\displaystyle\iint_{D_{k}}}
%EndExpansion
\left\vert \nabla\varphi\right\vert ^{2}=0. \label{lim eell2}%
\end{equation}
Moreover inequality (\ref{G-N}) and Young inequality imply
\begin{equation}
C_{1}^{-1}\left\vert D\backslash D_{k}\right\vert ^{\frac{N}{N+2}}%
\leq\underset{t\in\left(  0,T\right)  }{\sup}\left(  \int_{\Omega}\left\vert
\varphi\right\vert ^{2}\right)  ^{\frac{1}{2}}+\frac{1}{2}%
%TCIMACRO{\diint _{D_{k}}}%
%BeginExpansion
{\displaystyle\iint_{D_{k}}}
%EndExpansion
\left\vert \nabla\varphi\right\vert ^{2}+\frac{1}{2}\left\vert D_{k}%
\right\vert . \label{DDD}%
\end{equation}
By (\ref{DDD}), (\ref{lim 00}), (\ref{tempo zero 2}) and (\ref{lim eell2}) it
follows (\ref{lim D}). Then the assert holds changing the role of $u$ and $v$.
\end{proof}

\begin{remark}
If $G(x,t,u)=c(x)u$ the previous proofs follow easier multiplying equation by
$\exp(tc(x))$ and using as test function $\varphi=\frac{T_{k}(\left(
u-v\right)  ^{+}\exp(tc(x)))}{k}.$
\end{remark}

\begin{proof}
[Proof of Corollary 2.1]We can argue as in the previous proofs of uniqueness
putting $w=\left(  u-v\right)  ^{+}$.
\end{proof}

\section{Operators with a first order term}

We known that there exists at least a weak solution to Problem (\ref{P 1}).
Here we have to prove only the uniqueness.

\begin{proof}
[Proof of Theorem \ref{Th_Hp<2}]Let us suppose that $u$ and $v$ are two weak
solutions\ to Problem (\ref{P 1}) belonging to $L^{\infty}(0,T,L^{2}%
(\Omega))\cap L^{p}(0,T,W_{0}^{1,p}(\Omega))$ such that $w=u-v>0$ in a subset
$D\subset Q_{T}$ with $\left\vert D\right\vert >0.$ Let us denote
\[
w_{k}=\left\{
\begin{array}
[c]{ccc}%
w^{+}-k &  & \text{ if }w^{+}>k\\
0 &  & \text{otherwise}%
\end{array}
\right.
\]
for $k\in\left(  0,\underset{D}{\sup}w^{+}\right)  .$ We use $w_{k}$ as test
function in the difference of the equation:%
\begin{gather}
\!\!\!\int_{\!\Omega}\!ww_{k}\!+\!%
%TCIMACRO{\diint _{Q_{t}}}%
%BeginExpansion
{\displaystyle\iint_{Q_{t}}}
%EndExpansion
\!\!\left[  -w\left(  w_{k}\right)  _{t}\!+\!\left(  a\left(  x,t,u,\nabla
u\right)  \!-\!a\left(  x,t,v,\nabla v\right)  \right)  \nabla w_{k}\right]
\!\!\!\label{equazione2}\\
\!\!\leq%
%TCIMACRO{\diint _{Q_{t}}}%
%BeginExpansion
{\displaystyle\iint_{Q_{t}}}
%EndExpansion
\left\vert H\left(  x,t,\nabla v\right)  -H\left(  x,t,\nabla u\right)
\right\vert w_{k}\nonumber
\end{gather}
for $t\in\left(  0,T\right)  .$ We observe that
\begin{equation}
\int_{\Omega}ww_{k}-%
%TCIMACRO{\diint _{Q_{t}}}%
%BeginExpansion
{\displaystyle\iint_{Q_{t}}}
%EndExpansion
w\left(  w_{k}\right)  _{t}=\frac{1}{2}\int_{\Omega}w_{k}^{2}(t).
\label{derivata tempo}%
\end{equation}
Using (\ref{derivata tempo}), (\ref{monotonia forte}) and (\ref{lip H}) with
$h\in L^{\infty}(Q_{T}),$ $\eta>0$ and $\sigma\leq\frac{p-2}{2},$ inequality
(\ref{equazione2}) becomes%
\[
\frac{1}{2}\int_{\Omega}w_{k}^{2}(t)+\alpha%
%TCIMACRO{\diint _{Q_{t}}}%
%BeginExpansion
{\displaystyle\iint_{Q_{t}}}
%EndExpansion
\frac{\left\vert \nabla w_{k}\right\vert ^{2}}{\left(  \left\vert \nabla
u\right\vert +\left\vert \nabla v\right\vert \right)  ^{2-p}}\leq h%
%TCIMACRO{\diint _{Q_{t}}}%
%BeginExpansion
{\displaystyle\iint_{Q_{t}}}
%EndExpansion
\frac{\left\vert \nabla w_{k}\right\vert w_{k}}{\left(  \eta+\left\vert \nabla
u\right\vert +\left\vert \nabla v\right\vert \right)  ^{-\sigma}}.
\]
Taking the supremum on $t\in\left(  0,T\right)  $ we obtain%
\begin{equation}
\frac{1}{2}\underset{0<t<T}{\sup}\int_{\Omega}w_{k}^{2}+\alpha%
%TCIMACRO{\diint _{E_{k}}}%
%BeginExpansion
{\displaystyle\iint_{E_{k}}}
%EndExpansion
\frac{\left\vert \nabla w_{k}\right\vert ^{2}}{\left(  \left\vert \nabla
u\right\vert +\left\vert \nabla v\right\vert \right)  ^{2-p}}\leq h%
%TCIMACRO{\diint _{E_{k}}}%
%BeginExpansion
{\displaystyle\iint_{E_{k}}}
%EndExpansion
\frac{\left\vert \nabla w_{k}\right\vert w_{k}}{\left(  \eta+\left\vert \nabla
u\right\vert +\left\vert \nabla v\right\vert \right)  ^{-\sigma}},
\label{eq 4}%
\end{equation}
where $E_{k}=\left\{  (x,t)\in Q_{T}:k<w^{+}<\underset{D}{\sup}w\right\}  .$
Since $\sigma\leq\frac{p-2}{2},$ Young inequality gets
\begin{equation}%
%TCIMACRO{\diint _{E_{k}}}%
%BeginExpansion
{\displaystyle\iint_{E_{k}}}
%EndExpansion
\frac{\left\vert \nabla w_{k}\right\vert w_{k}}{\left(  \eta+\left\vert \nabla
u\right\vert +\left\vert \nabla v\right\vert \right)  ^{\sigma}}\leq
\frac{\delta}{2}%
%TCIMACRO{\diint _{E_{k}}}%
%BeginExpansion
{\displaystyle\iint_{E_{k}}}
%EndExpansion
\frac{\left\vert \nabla w_{k}\right\vert ^{2}}{\left(  \left\vert \nabla
u\right\vert +\left\vert \nabla v\right\vert \right)  ^{2-p}}+\frac{1}%
{4\delta\eta^{-2\sigma+p-2}}%
%TCIMACRO{\diint _{E_{k}}}%
%BeginExpansion
{\displaystyle\iint_{E_{k}}}
%EndExpansion
\left\vert w_{k}\right\vert ^{2} \label{int 4}%
\end{equation}
for $\delta>0.$ Putting (\ref{int 4}) in (\ref{eq 4}) and choosing $\delta$
small enough we have
\begin{equation}
\underset{0<t<T}{\sup}\int_{\Omega}w_{k}^{2}+%
%TCIMACRO{\diint _{E_{k}}}%
%BeginExpansion
{\displaystyle\iint_{E_{k}}}
%EndExpansion
\frac{\left\vert \nabla w_{k}\right\vert ^{2}}{\left(  \left\vert \nabla
u\right\vert +\left\vert \nabla v\right\vert \right)  ^{2-p}}\leq c%
%TCIMACRO{\diint _{E_{k}}}%
%BeginExpansion
{\displaystyle\iint_{E_{k}}}
%EndExpansion
\left\vert w_{k}\right\vert ^{2}, \label{quasi fine}%
\end{equation}
where $c$ is a positive constant independent on $k$. Using (\ref{G-N 2}),
H\"{o}lder and Young inequalities and (\ref{quasi fine}) we have%
\begin{align*}%
%TCIMACRO{\diint _{E_{k}}}%
%BeginExpansion
{\displaystyle\iint_{E_{k}}}
%EndExpansion
\left\vert w_{k}\right\vert ^{2}  &  \leq C_{\frac{2N}{N+2}}^{2}\left[
\underset{0<t<T}{\sup}\int_{\Omega}\left\vert w_{k}\right\vert ^{2}\right]
^{\frac{2}{N+2}}%
%TCIMACRO{\diint _{E_{k}}}%
%BeginExpansion
{\displaystyle\iint_{E_{k}}}
%EndExpansion
\left\vert \nabla w_{k}\right\vert ^{\frac{2N}{N+2}}\\
\leq C_{\frac{2N}{N+2}}^{2}  &  \left[  \underset{0<t<T}{\sup}\int_{\Omega
}\left\vert w_{k}\right\vert ^{2}\right]  ^{\frac{2}{N+2}}\left(
%TCIMACRO{\diint _{E_{k}}}%
%BeginExpansion
{\displaystyle\iint_{E_{k}}}
%EndExpansion
\frac{\left\vert \nabla w_{k}\right\vert ^{2}}{\left(  \left\vert \nabla
u\right\vert +\left\vert \nabla v\right\vert \right)  ^{2-p}}\right)
^{\frac{N}{N+2}}\\
&  \times\left(
%TCIMACRO{\diint _{E_{k}}}%
%BeginExpansion
{\displaystyle\iint_{E_{k}}}
%EndExpansion
\left(  \left\vert \nabla u\right\vert +\left\vert \nabla v\right\vert
\right)  ^{\left(  2-p\right)  \frac{N}{2}}\right)  ^{\frac{2}{N+2}}\\
&  \leq C_{\frac{2N}{N+2}}^{2}\left[  \frac{2}{N+2}\underset{0<t<T}{\sup}%
\int_{\Omega}\left\vert w_{k}\right\vert ^{2}+\frac{N}{N+2}%
%TCIMACRO{\diint _{E_{k}}}%
%BeginExpansion
{\displaystyle\iint_{E_{k}}}
%EndExpansion
\frac{\left\vert \nabla w_{k}\right\vert ^{2}}{\left(  \left\vert \nabla
u\right\vert +\left\vert \nabla v\right\vert \right)  ^{2-p}}\right] \\
&  \times\left(
%TCIMACRO{\diint _{E_{k}}}%
%BeginExpansion
{\displaystyle\iint_{E_{k}}}
%EndExpansion
\left(  \left\vert \nabla u\right\vert +\left\vert \nabla v\right\vert
\right)  ^{\left(  2-p\right)  \frac{N}{2}}\right)  ^{\frac{2}{N+2}}\\
&  \leq c\frac{C_{\frac{2N}{N+2}}^{2}N}{N+2}\left(
%TCIMACRO{\diint _{E_{k}}}%
%BeginExpansion
{\displaystyle\iint_{E_{k}}}
%EndExpansion
\left\vert w_{k}\right\vert ^{2}\right)  \left(
%TCIMACRO{\diint _{E_{k}}}%
%BeginExpansion
{\displaystyle\iint_{E_{k}}}
%EndExpansion
\left(  \left\vert \nabla u\right\vert +\left\vert \nabla v\right\vert
\right)  ^{\left(  2-p\right)  \frac{N}{2}}\right)  ^{\frac{2}{N+2}},
\end{align*}
where $C_{\frac{2N}{N+2}}$ is the constant in (\ref{G-N}). It easily follows
that%
\[
1\leq c\frac{C_{\frac{2N}{N+2}}^{2}N}{N+2}\left(
%TCIMACRO{\diint _{E_{k}}}%
%BeginExpansion
{\displaystyle\iint_{E_{k}}}
%EndExpansion
\left(  \left\vert \nabla u\right\vert +\left\vert \nabla v\right\vert
\right)  ^{\left(  2-p\right)  \frac{N}{2}}\right)  ^{\frac{2}{N+2}}.
\]
Since $p\geq\frac{2N}{N+2},$ the right-hand side goes to zero when $k$ goes to
$\underset{D}{\sup}w^{+}$, which is impossible. To complete the proof we have
to change the role of $u$ and $v$.
\end{proof}

\begin{proof}
[Proof of Theorem \ref{Th_Hp>2}]We argue as in the proof of Theorem
\ref{Th_Hp<2}, obtaining%
\begin{align}
&  \frac{1}{2}\underset{0<t<T}{\sup}\int_{\Omega}\left\vert w\right\vert
^{2}+\alpha%
%TCIMACRO{\diint _{E_{k}}}%
%BeginExpansion
{\displaystyle\iint_{E_{k}}}
%EndExpansion
\left\vert \nabla w_{k}\right\vert ^{2}\left(  \varepsilon+\left\vert \nabla
u\right\vert +\left\vert \nabla v\right\vert \right)  ^{p-2}\label{eq3}\\
&  \leq%
%TCIMACRO{\diint _{E_{k}}}%
%BeginExpansion
{\displaystyle\iint_{E_{k}}}
%EndExpansion
h\left(  \left\vert \nabla u\right\vert +\left\vert \nabla v\right\vert
\right)  ^{\sigma}\left\vert \nabla w_{k}\right\vert w_{k}.\nonumber
\end{align}
Let us suppose $\sigma\geq\frac{p-2}{2}.$ Using H\"{o}lder inequality and
inequality (\ref{G-N}) we have%
\begin{equation}%
%TCIMACRO{\diint _{E_{k}}}%
%BeginExpansion
{\displaystyle\iint_{E_{k}}}
%EndExpansion
h\left(  \left\vert \nabla u\right\vert +\left\vert \nabla v\right\vert
\right)  ^{\sigma}\left\vert \nabla w_{k}\right\vert w_{k}
\label{intermedia 3}%
\end{equation}%
\begin{align}
\text{ \ \ \ }  &  \leq\left(
%TCIMACRO{\diint _{E_{k}}}%
%BeginExpansion
{\displaystyle\iint_{E_{k}}}
%EndExpansion
\left(  h\left(  \left\vert \nabla u\right\vert +\left\vert \nabla
v\right\vert \right)  ^{\sigma-\frac{p-2}{2}}\right)  ^{N+2}\right)
^{\frac{1}{N+2}}\nonumber\\
&  \times\left(
%TCIMACRO{\diint _{E_{k}}}%
%BeginExpansion
{\displaystyle\iint_{E_{k}}}
%EndExpansion
\left\vert \nabla w_{k}\right\vert ^{2}\left(  \varepsilon+\left\vert \nabla
u\right\vert +\left\vert \nabla v\right\vert \right)  ^{p-2}\right)
^{\frac{1}{2}}\left(
%TCIMACRO{\diint _{E_{k}}}%
%BeginExpansion
{\displaystyle\iint_{E_{k}}}
%EndExpansion
\left\vert w_{k}\right\vert ^{\frac{2(N+2)}{N}}\right)  ^{\frac{N}{2(N+2)}%
}\nonumber\\
&  \leq C_{2}\left(
%TCIMACRO{\diint _{E_{k}}}%
%BeginExpansion
{\displaystyle\iint_{E_{k}}}
%EndExpansion
\left(  h\left(  \left\vert \nabla u\right\vert +\left\vert \nabla
v\right\vert \right)  ^{\sigma-\frac{p-2}{2}}\right)  ^{N+2}\right)
^{\frac{1}{N+2}}\left(
%TCIMACRO{\diint _{E_{k}}}%
%BeginExpansion
{\displaystyle\iint_{E_{k}}}
%EndExpansion
\left\vert \nabla w_{k}\right\vert ^{2}\left(  \varepsilon+\left\vert \nabla
u\right\vert +\left\vert \nabla v\right\vert \right)  ^{p-2}\right)
^{\frac{1}{2}}\nonumber\\
&  \times\left[  \underset{0<t<T}{\sup}\left(  \int_{\Omega}\left\vert
w_{k}\right\vert ^{2}\right)  ^{\frac{1}{2}}+\left(
%TCIMACRO{\diint _{E_{k}}}%
%BeginExpansion
{\displaystyle\iint_{E_{k}}}
%EndExpansion
\left\vert \nabla w_{k}\right\vert ^{2}\right)  ^{\frac{1}{2}}\right]
,\nonumber
\end{align}
where $C_{2}$ is the constant in (\ref{G-N}). Putting (\ref{intermedia 3}) in
(\ref{eq3}),\ by Young inequality and some easy computation we obtain%
\begin{align*}
\min\left\{  \frac{1}{2},\alpha\varepsilon^{p-2}\right\}   &  \left[
\underset{0<t<T}{\sup}\int_{\Omega}\left\vert w\right\vert ^{2}+%
%TCIMACRO{\diint _{E_{k}}}%
%BeginExpansion
{\displaystyle\iint_{E_{k}}}
%EndExpansion
\left\vert \nabla w_{k}\right\vert ^{2}\right] \\
&  \leq\sqrt{2}C_{2}\max\left\{  1,\frac{1}{\varepsilon^{\frac{p-2}{2}}%
}\right\}  \left(
%TCIMACRO{\diint _{E_{k}}}%
%BeginExpansion
{\displaystyle\iint_{E_{k}}}
%EndExpansion
\left(  h\left(  \left\vert \nabla u\right\vert +\left\vert \nabla
v\right\vert \right)  ^{\sigma-\frac{p-2}{2}}\right)  ^{N+2}\right)
^{\frac{1}{N+2}}\times\\
&  \left[  \underset{0<t<T}{\sup}\int_{\Omega}\left\vert w_{k}\right\vert
^{2}+%
%TCIMACRO{\diint _{E_{k}}}%
%BeginExpansion
{\displaystyle\iint_{E_{k}}}
%EndExpansion
\left\vert \nabla w_{k}\right\vert ^{2}\left(  \varepsilon+\left\vert \nabla
u\right\vert +\left\vert \nabla v\right\vert \right)  ^{p-2}\right]  ,
\end{align*}
\textit{i.e.}%
\begin{equation}
1\leq\frac{\sqrt{2}C_{2}\max\left\{  1,\frac{1}{\varepsilon^{\frac{p-2}{2}}%
}\right\}  }{\min\left\{  \frac{1}{2},\alpha\right\}  }\left(
%TCIMACRO{\diint _{E_{k}}}%
%BeginExpansion
{\displaystyle\iint_{E_{k}}}
%EndExpansion
\left(  h\left(  \left\vert \nabla u\right\vert +\left\vert \nabla
v\right\vert \right)  ^{\sigma-\frac{p-2}{2}}\right)  ^{N+2}\right)
^{\frac{1}{N+2}}. \label{finale}%
\end{equation}
Since $\frac{N+2}{r}+\frac{\left(  \sigma-\frac{p-2}{2}\right)  (N+2)}{p}%
\leq1,$ the right-hand side in (\ref{finale}) goes to zero when $k$ goes to
$\underset{D}{\sup}w$, which is impossible.

\noindent Conversely if $0\leq\sigma<\frac{p-2}{2}$ as before we have%
\begin{align}
&  \!%
%TCIMACRO{\diint _{E_{k}}}%
%BeginExpansion
{\displaystyle\iint_{E_{k}}}
%EndExpansion
\left(  \left\vert \nabla u\right\vert +\left\vert \nabla v\right\vert
\right)  ^{\sigma}\left\vert \nabla w_{k}\right\vert w_{k}%
\!\!\label{intermedia 2}\\
\!\!  &  \leq\!\left(  \!%
%TCIMACRO{\diint _{E_{k}}}%
%BeginExpansion
{\displaystyle\iint_{E_{k}}}
%EndExpansion
\!\left(  h\left(  \left\vert \nabla u\right\vert +\left\vert \nabla
v\right\vert \right)  ^{\sigma}\!\right)  \!^{N+2}\!\right)  \!^{\frac{1}%
{N+2}}\!\!\!\left(  \!\!%
%TCIMACRO{\diint _{E_{k}}}%
%BeginExpansion
{\displaystyle\iint_{E_{k}}}
%EndExpansion
\!\!\left\vert \nabla w_{k}\right\vert ^{2}\!\right)  ^{\frac{1}{2}%
}\!\!\!\left(  \!\!%
%TCIMACRO{\diint _{E_{k}}}%
%BeginExpansion
{\displaystyle\iint_{E_{k}}}
%EndExpansion
\!\!\left\vert w_{k}\right\vert ^{\frac{2(N+2)}{N}}\!\right)  \!^{\frac
{N}{2(N+2)}}\!\nonumber\\
\!\!  &  \leq\!C_{2}\left(
%TCIMACRO{\diint _{E_{k}}}%
%BeginExpansion
{\displaystyle\iint_{E_{k}}}
%EndExpansion
\left(  h\left(  \left\vert \nabla u\right\vert +\left\vert \nabla
v\right\vert \right)  ^{\sigma}\right)  ^{N+2}\right)  ^{\frac{1}{N+2}%
}\!\left(
%TCIMACRO{\diint _{E_{k}}}%
%BeginExpansion
{\displaystyle\iint_{E_{k}}}
%EndExpansion
\left\vert \nabla w_{k}\right\vert ^{2}\right)  ^{\frac{1}{2}}\!\nonumber\\
\!\!  &  \times\left[  \underset{0<t<T}{\sup}\left(  \int_{\Omega}\left\vert
w_{k}\right\vert ^{2}\right)  ^{\frac{1}{2}}+\left(
%TCIMACRO{\diint _{E_{k}}}%
%BeginExpansion
{\displaystyle\iint_{E_{k}}}
%EndExpansion
\left\vert \nabla w_{k}\right\vert ^{2}\right)  ^{\frac{1}{2}}\right]
.\!\!\nonumber
\end{align}
Then putting (\ref{intermedia 2}) in (\ref{eq3}) by Young inequality and some
easy computation we have%
\[
1\leq\frac{\sqrt{2}C_{2}}{\min\left\{  \frac{1}{2},\alpha\varepsilon
^{p-2}\right\}  }\left(
%TCIMACRO{\diint _{E_{k}}}%
%BeginExpansion
{\displaystyle\iint_{E_{k}}}
%EndExpansion
\left(  h\left(  \left\vert \nabla u\right\vert +\left\vert \nabla
v\right\vert \right)  ^{\sigma}\right)  ^{N+2}\right)  ^{\frac{2}{N+2}}.
\]
Since $\frac{N+2}{r}+\frac{\sigma(N+2)}{p}\leq1,$ the contradiction follows
again. Changing the role of $u$ and $v$, we complete the proof.
\end{proof}

\begin{proof}
[Proof of Corollary 2.2]We can argue as in the proof of Theorems \ref{Th_Hp<2}
and \ref{Th_Hp>2}, putting $w=\left(  u-v\right)  ^{+}$.
\end{proof}

\end{document}